\newcommand{\uphi}{\underline{\varphi}}
\newcommand{\umu}{\underline{\mu}}
\newcommand{\betone}[3]{[b_{#1}^{(2)}]_{#2}^{#3}}
\newcommand{\betonet}[3]{\big[b_{#1}^{(3)}\big]_{#2}^{#3}}
\newcommand{\betoneq}[3]{\big[b_{#1}^{(4)}\big]_{#2}^{#3}}
\newcommand{\e}{\epsilon}
\newcommand{\ttf}{\mathtt{f}}
\newcommand{\im}{\mathrm{i}\,}
\newcommand\restr[2]{{% we make the whole thing an ordinary symbol
  \left.\kern-\nulldelimiterspace % automatically resize the bar with \right
  #1 % the function
  \vphantom{\big|} % pretend it's a little taller at normal size
  \right|_{#2} % this is the delimiter
  }}
\newcommand{\ch}{{\mathtt c}_{\mathtt h}}
\theoremstyle{plain}
\newtheorem{lem}{Lemma}
\newtheorem{teo}[lem]{Theorem}
\newtheorem{conj}[lem]{Conjecture}
\theoremstyle{definition}
\newcommand{\uno}{\mathrm{Id}}
\newcommand{\bR}{\mathbb{R}}
\newcommand{\bT}{\mathbb{T}}
\newcommand{\bZ}{\mathbb{Z}}
\newcommand{\bN}{\mathbb{N}}
\newcommand{\bC}{\mathbb{C}}
\newcommand{\cL}{{\cal L}}
\newcommand{\tp}{\mathtt{p}}
\newcommand{\pa}{\partial}
\newcommand{\tth}{\mathtt{h}}
\numberwithin{equation}{section}
\title{\bf On higher order isolas of unstable Stokes waves}
\begin{document}

 \author{Massimiliano Berti, Livia Corsi, Alberto Maspero, Paolo Ventura}

\date{}

\maketitle

\noindent 

\noindent
{\bf Abstract.} We overview the recent result \cite[Theorem 1.1]{BCMV} about the high-frequency 
instability  of Stokes waves subject to longitudinal perturbations.  The 
spectral bands of unstable eigenvalues away from the origin form a sequence of
{\it isolas} %(elliptic branches) 
parameterized by an integer $ \tp \geq 2 $ for any value of the depth $ \tth > 0 $ % of the ocean 
such that an explicit  analytic 
function $\beta_1^{(\mathtt{p})}(\tth)  $ is not zero.
In \cite{BCMV} it is proved that the map $ \tth \mapsto \beta_1^{(\mathtt{p})}(\tth)  $ is not identically zero for any 
$ \tp \geq 2 $ 
 by showing that  $ \lim_{\tth \to 0^+}\beta_1^{(\mathtt{p})}(\tth) = - \infty $. 
    In this manuscript  we compute the asymptotic expansion 
    of $\beta_1^{(\mathtt{p})}(\tth) $
    in the deep-water limit $ \tth \to + \infty $ --it vanishes exponentially fast to zero-- 
    for $\tp=2$, $3$, $4$.

% \tableofcontents

\section{Introduction}

Stokes waves are among the most renowned global in time solutions of the gravity water waves equations,
discovered in the pioneering work 
\cite{stokes} of Stokes  in 1847. %  We remind that 
The water waves equations govern 
the time evolution of 
% the free surface $ \eta (t,x) $ 
a bidimensional, incompressible, perfect 
fluid  occupying  the time dependent region 
%We consider the Euler equations for a 2-dimensional  %incompressible 
%and irrotational fluid 
% velocity field % under the action of  gravity 
$$
{ \mathcal D}_\eta := \big\{ (x,y)\in \bT\times \bR\;:\; -\tth < y< \eta(t,x) \big\} \, , 
\quad  \bT :=\bR/2\pi\bZ  \, , 
$$ 
under the action of  gravity. 
The dynamics of the fluid is 
determined  by the Euler equations inside $ {\mathcal D}_\eta $ and 
by two boundary conditions at the free surface. 
The first is that the fluid particles  remain, along the evolution, on the free surface   
(kinematic boundary condition), 
and the second one is that the pressure of the fluid  
is equal, at the free surface, to the constant atmospheric pressure 
 (dynamic boundary condition). 
As shown 
by Zhakarov \cite{Zak1} % and Craig-Sulem \cite{CS} 
these equations form an infinite dimensional Hamiltonian system. 

The Stokes waves 
are space and time 
periodic traveling solutions of the form
$ \eta_\e (x- c_\e t  )$ with a periodic profile 
$ \eta_\e (\cdot ) $.
%thus steady in a reference inertial frame moving with speed $ c_\e $.   
  The first mathematically rigorous proof of their  existence was given by \cite{Struik,LC,Nek} almost one century ago.

\smallskip
 A question of fundamental physical importance concerns their stability/instability. 

\smallskip 
In the sixties 
 Benjamin and Feir \cite{BF,Benjamin}, Whitham \cite{Whitham},  Lighthill \cite{Li} and Zakharov \cite{Z0,ZK} discovered, 
 through experiments and formal arguments, that small amplitude  Stokes waves in sufficiently deep water are  unstable
when subject to long-wave perturbations. More precisely, their works predicted 
that
%the existence of a critical depth 
%$ \tthWB := 1.363... $ such that  $ 
2$\pi $-periodic Stokes waves evolving in a tank of  depth
$   \mathtt h > 1.363... $  and length $  2 \pi  / \mu $, with % Floquet exponent 
$  \mu \ll 1 $, are unstable. 
This phenomenon is nowadays called ``Benjamin-Feir'' --or modulational-- instability.
It is supported by an enormous  amount of  
physical observations and numerical simulations, see e.g. \cite{DO,KDZ},  
and detected in  several dispersive  fluid PDE models, such as KdV, gKdV, NLS and the Whitham equation, 
see e.g. % \cite{Wh,SHCH,GH,BJ,HJ,BHJ,MR}.
\cite{Wh,SHCH,GH,HK,BJ,J,HJ,BHJ,HP, MR}.

In the eighties other mechanisms of instability
-called nowadays high frequency instabilities-
were detected by McLean \cite{Mc1,Mc2}.  
In order to explain properly these phenomena  
we formulate  the 
problem in  rigorous mathematical terms.
\\[1mm]
{\bf Mathematical formulation.}
%  as follows.  
 %It is convenient to use the Zakharov fortmulation, since the Hamiltonian and reversible 
 %structure play a key role. 
 Consider 
% the pure gravity water waves equations  for a bidimensional fluid in an 
   a $2\pi$-periodic  Stokes wave 
    with amplitude $0< \e \ll 1$
    in an ocean with depth $ \tth > 0 $.
The linearized  water waves equations at the Stokes waves
are, in the inertial reference frame moving with the speed $ c_\e $ of the wave,
% =
% \ch + O(\e^2 )$, $ \ch := \sqrt{\tanh(\tth)} $
 a linear time-independent system of partial pseudo-differential equations 
\begin{equation}
\label{linoriginale}
\pa_t h (t,x) = \mathcal{L}_{\e}
( \tth) \, [h(t,x)]  \, .  
\end{equation}
Using the Zakharov Hamiltonian formulation \cite{Zak1} 
of the water waves 
equation (and after some changes of variables) 
the linear operator $\cL_\e (\tth) $ 
%$  H^1(\bT,\bR^2) 
%\to L^2(\bT, \bR^2) $ 
%is a pseudo-differential linear operator  with $ 2 \pi $-periodic coefficients.
%Actually 
%(after conjugating with the ``good unknown of Alinhac" and the ``Levi-Civita" transformations)
% e refer to  \cite{BMV1,BMV2,BMV_ed}, 
%the linear system 
%\begin{equation}\label{linWW}%
%h_t = \cL_\e h   
%\end{equation}
% where 
%$\cL_\e $ 
turns out to be the Hamiltonian and reversible real operator
\begin{equation}\label{cLepsilon}
 \cL_\e(\tth)   
:=  
\begin{bmatrix} 
0 & \uno \\ -\uno & 0 \end{bmatrix}  
\begin{bmatrix}   1+a_\e(x) &   -(\ch+p_\e(x)) \pa_x \\ 
\pa_x \circ (\ch+p_\e(x)) &  |D|\tanh((\tth+\mathtt{f}_\e) |D|)  
\end{bmatrix} 
\end{equation} 
where  
$p_\e (x), a_\e (x) $ are real  even analytic functions and $\ttf_\e $ is a real number.
Both $p_\e (x), a_\e (x) $ and  $\ttf_\e $ depend analytically with respect to  $ \e $. We refer to \cite{NS,BMV1,BMV3} for details.

% \footnote{The operator $ \mathcal{L}_{\e}
% (\tth) $ is given  in \eqref{cLepsilon} and it is   obtained 
% conjugating the linearized water waves equations in the Zakharov  formulation 
% via the  ``good unknown of Alinhac"  and the 
% Levi-Civita  invertible transformations.}.
The dynamics 
of \eqref{linoriginale} for  $ h (t,\cdot) $  in  $ L^2 (\bR) $ is fully determined by  the 
spectrum of $  \mathcal{L}_{\e}
(\tth)  $ in  $ L^2 (\bR) $. 

Let us % describe 
 summarize what is known about it.
% the mathematical state of the art. 
%In recent years it has been proved the following: the spectrum contains a figure ``8" close to the origin \cite{BrM,NS,BMV1,BMV3,BMV_ed} and an isola right above  the figure ``8" \cite{HY,CNS,BMV4}.  
%Supported by numerical simulations,
%it had  been conjectured 
%in \cite{DO}  the
% {\em  existence of infinitely many isolas}
%in the spectrum, 
%centered along the 
%imaginary axis, and  shrinking exponentially fast away from the origin, see \ref{fig:isole}. 
%This has been proved in \cite{BCMV}. 
The first mathematically  rigorous proof 
of local branches of  Benjamin-Feir spectrum was obtained   by Bridges-Mielke \cite{BrM}  in finite depth, for $ \tth > 1.363... $,  see also \cite{HY}, and recently  by Nguyen-Strauss \cite{NS}  
in deep water.   
Very recently  Berti-Maspero-Ventura \cite{BMV1,BMV3,BMV_ed}   proved that these local branches can be globally continued to form a  complete figure ``8'', a fact that   
had been observed numerically in \cite{DO}. The papers \cite{BMV1}, resp. \cite{BMV3},  proved the existence of the figure ``8" in deep water, resp.  in     finite depth  $ \tth > 1.363... $; finally  \cite{BMV_ed} 
proved that also at 
the critical depth $ \tth = 1.363... $, and also slightly below it, the Stokes waves are modulationally unstable, answering to a question   that had been controversial for long times.  
This settles the question about the shape of the spectrum close to the origin.

\smallskip
In the eighties McLean \cite{Mc1,Mc2}
 revealed  
%Preliminary numerical investigations about the 
the existence of unstable  spectrum away from the origin.  Later on 
Deconinck and Oliveras  obtained in \cite{DO}
the complete numerical 
plots of  these high-frequency 
instabilities, which    appear  as small isolas   centered on the imaginary axis. In contrast to Benjamin-Feir instability, they  occur at all values of the depth. 
Deconinck and Oliveras 
% were able to plot the first few of these isolas, and 
conjectured the existence of infinitely many isolas, parametrized by integers $\tp \geq 2 $, going to infinity along the imaginary axis  and shrinking as $ O(\e^\tp) $, see Figure \ref{fig:isole}.

\begin{figure}[h!!!]\centering \subcaptionbox*{}[.45\textwidth]{\includegraphics[width=6cm]{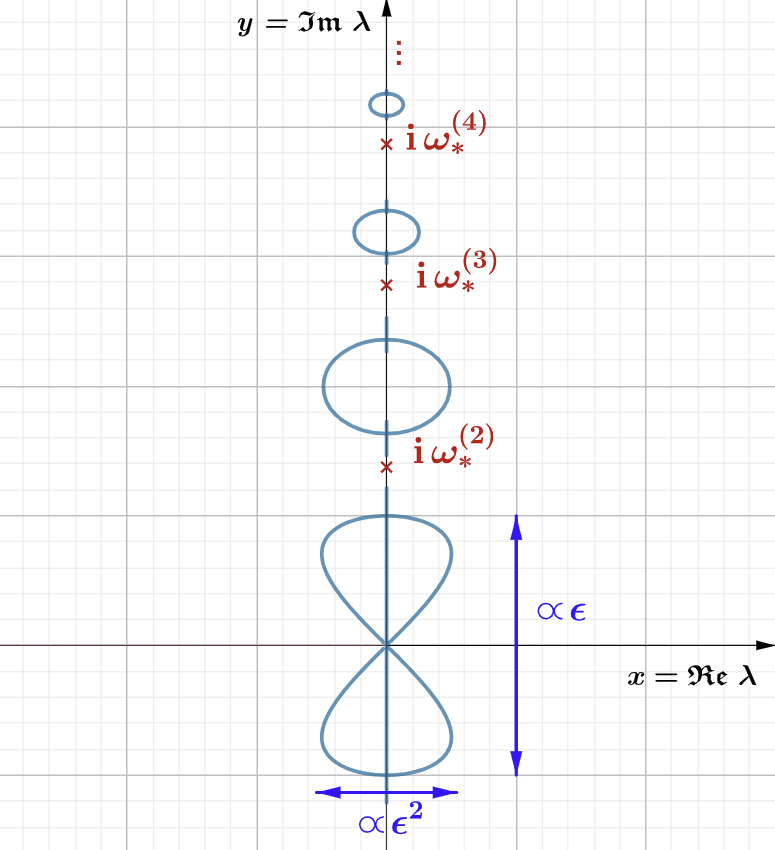}}
\subcaptionbox*{}[.45\textwidth]{
\includegraphics[width=7cm]{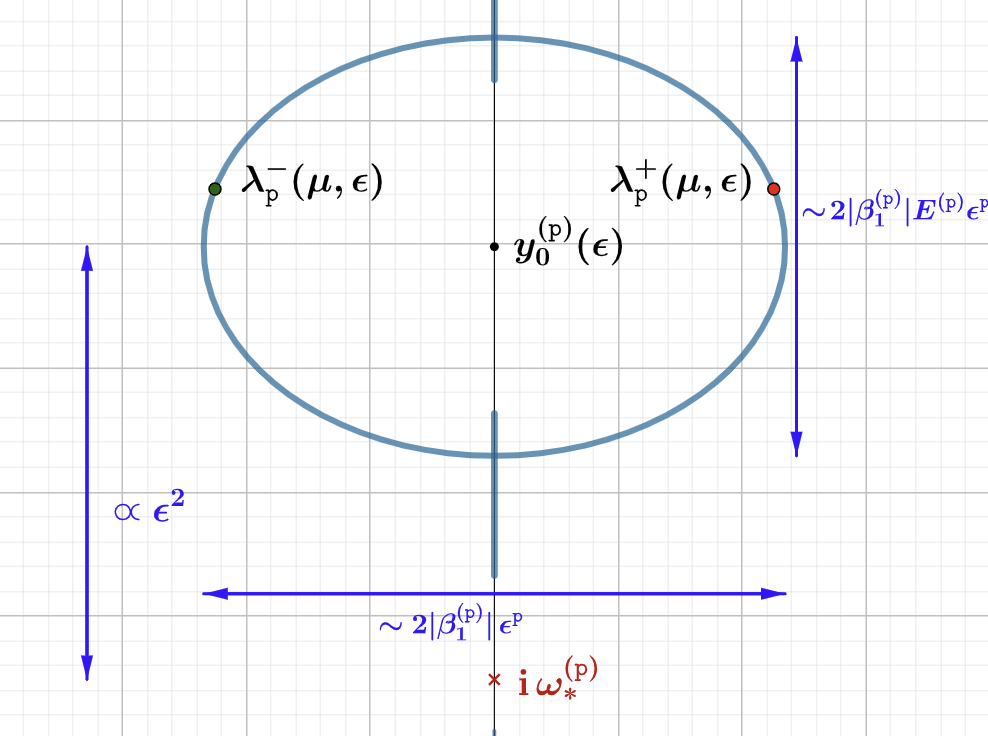} }
 \vspace{-0.8cm}
\caption{ \label{fig:isole} 
Spectral bands with non zero real part of the 
$ L^2 (\bR) $-spectrum of
$ \cL_{\e} $. 
On the right,  zoom of the $\tp$-th isola of modulational instability.  Its center $ y_0^{({\mathtt p} )} (\e) $ is $ O(\e^2) $ distant from $ \im \omega_*^{({\mathtt p} )} $ and its size is $ \propto \e^{{\mathtt p} } $. As shown in Theorem \ref{thm:main2}, two eigenvalues $\lambda^{\pm}_\tp(\mu,\e)$ of the Floquet operator $\cL_{\mu,\e}$  span the $\tp$-th isola for Floquet exponents $\mu \in (\mu_\wedge^{({\mathtt p} )}(\e),\mu_\vee^{({\mathtt p} )}(\e) ) $ and recollide on the imaginary axis at the ends of the interval.  
}
\end{figure}

A formal perturbation method to describe them   has been developed in \cite{CDT} for the first two isolas closest to the origin 
 (i.e. for $\tp=2,3 $ in the language of Theorem \ref{thm:main2}).  
 The first  high-frequency instability isola 
 (i.e. $\tp= 2 $) has been  rigorously proved in  \cite{HY} in finite depth and 
in \cite{BMV4} for the deep-water case. 

\smallskip 

The  proof of the conjecture of Deconinck and Oliveras   is a challenging task  since  
the $\tp$-th isola, if ever exists,  has
exponentially small size $ \propto \e^\tp$, 
it is spanned for an interval of Floquet exponents of very small width $ \propto \e^\tp$ and 
depends on the Taylor expansion of the Stokes waves  at order $\tp$.  
Notwithstanding, % The long standing conjecture of 
the 
existence of infinitely many isolas of instability has been recently proved in \cite{BCMV}, see 
Theorem \ref{thm:main2}.

Before stating precisely Theorem \ref{thm:main2}
and the main novel contribution of this manuscript, 
Theorem \ref{thm:main2new},  we  mention 
the nonlinear Benjaimin-Feir instability result in % Chen-Su 
\cite{ChenSu}, 
the very  recent papers \cite{CNS,CNS1} 
about instability of the Stokes waves 
under tranverse perturbations  
%More precisely, 
%these works prove 
%relying on the spectral approach in 
%\cite{BMV1},   
%the first instability isola for   Stokes waves %under transversal perturbations
in finite and infinite depth, and
%Instability of Stokes waves has been 
%very recently detected  the preprint 
\cite{Sun} for longitudinal and transversal perturbations.
Another  result of transverse instability of 
gravity-capillary Stokes waves is proved in   \cite{HTW}. 
%Instability of Stokes waves has been 
%very recently detected 
% in \cite{Sun} for under longitudinal and transversal perturbations.
\\[1mm]
{\bf Bloch-Floquet expansion.}
Since the operator $\cL_\e := \cL_\e(\tth)$ in 
\eqref{cLepsilon} has $2\pi$-periodic coefficients, its $ L^2(\bR) $ spectrum is conveniently described 
by the Bloch-Floquet decomposition 
 \begin{equation}
     \label{BFtheory}
     \sigma_{L^2(\bR)}\big( 
     \cL_\e  \big)
 = \bigcup_{\mu \in \big[-\tfrac12, \tfrac12 \big)}
 \sigma_{L^2(\bT)}\big( \cL_{\mu,\e} 
 \big)\, \qquad \textup{where}\qquad 
 \cL_{\mu,\e} :=
\cL_{\mu,\e} (\tth) := e^{-\im \mu x}\, \cL_\e\,  e^{\im \mu x}  
 \end{equation}
is 
%a pseudo-differential  
%operator,
%with $2 \pi $ coefficients, 
%depending on $    \mu $. 
%It turns out to that $  \cL_{\mu,\e} $ is 
the complex  \emph{Hamiltonian} and \emph{reversible} pseudo-differential  operator
\begin{align}\label{WW}
 \cL_{\mu,\e}    
 %:= \begin{bmatrix} (\pa_x+\im\mu)\circ (\ch+p_\e(x)) & 
 % |D+\mu| \tanh\big((\tth + \ttf_\e) |D+\mu| \big) \\ %-(1+a_\e(x)) & (\ch+p_\e(x))(\pa_x+\im \mu) %\end{bmatrix} \\ 
 = \begin{bmatrix} 0 & \uno\\ -\uno & 0 \end{bmatrix} \begin{bmatrix} 1+a_\e(x) & -(\ch+p_\e(x))(\pa_x+\im \mu) \\ (\pa_x+\im\mu)\circ (\ch+p_\e(x)) & |D+\mu| \tanh\big((\tth + \ttf_\e) |D+\mu| \big) 
 \end{bmatrix}   \, ,   
\end{align} 
% with $2 \pi $ coefficients.
with 
domain $H^1(\bT):= H^1(\mathbb{T},\bC^2)$ and range $L^2(\bT):=L^2(\mathbb{T},\bC^2)$.

The spectrum  of
$ \cL_{\mu,\e} $ 
on 
$ L^2 (\bT ) $ 
is discrete and its eigenvalues  span, as $ \mu $ varies,   the continuous spectral  bands of $\sigma_{L^2(\bR)}\big( 
     \cL_\e  \big)$. 
The parameter 
$ \mu $ is usually referred to as the Floquet exponent.
A direct consequence of the Bloch-Floquet decomposition is that any 
solution 
of \eqref{linoriginale} can be decomposed as a linear superposition of Bloch-waves
\begin{equation}
\label{hesplode}
h(t,x) =
e^{ \lambda t}
e^{\im \mu x} v(x)
\end{equation}
where  $\lambda
$ is an eigenvalue of $\cL_{\mu,\e} $ 
with  associated  eigenvector 
$ v(x) \in L^2(\bT) $. 
 If $\lambda $ is  unstable, i.e. has positive real part,  then the 
solution 
\eqref{hesplode}
 grows exponentially fast in time.  
 
 %Indeed, Bloch-Floquet theory ensures that each spectral value $\lambda \in \sigma_{L^2(\bR)}\big( \cL_\e(\tth) \big)$ is an  eigenvalue 
 % of the operator  $ \mathcal{L}_{\mu,\e} 
%:= e^{-\im \mu x } \,\mathcal{L}_{\e} \, e^{\im \mu x } $ in \eqref{WW}
%acting on $2\pi$-periodic functions. \\ 

We remark that the spectrum
$ \sigma_{L^2(\bT)}(\cL_{\mu,\e}) $
is a set which is
$1 $-periodic in $ \mu $, and thus
it is sufficient to consider 
$ \mu $ in  the first zone of Brillouin $ [-\tfrac12, \tfrac12) $. Furthermore by the reality 
of $ \cL_{\e} $ the spectrum $ \sigma_{L^2(\bT)}(\cL_{-\mu,\e}) $
is the complex conjugated of 
$ \sigma_{L^2(\bT)}(\cL_{\mu,\e})$
and we can restrict   $\mu $ to the interval $ [0,\tfrac12) $. From a physical point of view, the spectrum of the operator   $\cL_{\mu,\e}$ encodes the linear dynamics of any long-wave perturbation of the Stokes waves of amplitude $\e $ 
evolving inside a tank of length $ 2 \pi \mu^{-1} $. 

\smallskip
The Hamiltonian nature of the operator $\cL_{\mu,\e}  $ constrains the eigenvalues with nonzero real part to arise, for
small $ \e > 0  $, as perturbation of {\it multiple}  eigenvalues of 
% the Fourier multiplier matrix operator  
% $\cL_{\mu,0} (\tth) $. 
%The spectrum of  the  Fourier multiplier matrix operator 
\begin{equation}\label{cLmu}
 \cL_{\mu,0} = 
 \begin{bmatrix} 
 \ch (\pa_x+\im\mu)  & |D+\mu|\tanh(\tth |D+\mu|) \\ -1 & \ch(\pa_x+\im\mu) \end{bmatrix} \, .  
\end{equation}
The spectrum of the 
Fourier multiplier matrix operator  
 $\cL_{\mu,0} (\tth) $ 
on $L^2(\bT,\bC^2)$  
is given by the purely imaginary numbers
\begin{equation}\label{omeghino}
\lambda_j^\sigma(\mu,\tth):=
\im \omega^\sigma(j+\mu,\tth)=   \ch(j+\mu) -\sigma \Omega(j+\mu, \tth)   \, ,
\  \forall \sigma   = \pm \, , \,  j\in \bZ \, , 
\end{equation}
where  
\begin{equation}\label{Oomegino}
\omega^\sigma(\varphi,\tth) :=  \ch \varphi -\sigma \Omega(\varphi,\tth) \, ,  \qquad 
 \Omega(\varphi,\tth) :=\sqrt{\varphi\tanh(\tth \varphi)}  \, , 
\qquad   \varphi \in \bR\,  ,\ \ \sigma=\pm\, .
\end{equation}
Note that $\omega^+(-\varphi,\tth) = -\omega^-(\varphi,\tth) $. Sometimes   
$ \sigma $ is called the Krein signature of the eigenvalue $ \lambda_j^\sigma(\mu,\tth) $.
The spectrum of $\cL_{\mu,0}(\tth) $ is  purely imaginary. 
As stated in \cite{BCMV}[Lemma 2.9], for any 
$\tth > 0 $ and $ \mu \in \bR$, the operator $\cL_{\mu,0} (\tth) $ possesses, away from $0$, only simple or double eigenvalues. Its   double  eigenvalues
form a diverging sequence  
$$ 
\{\pm \im \omega_*^{(\tp)}(\tth ) \}_{\tp=2,3,\dots} $$ 
 where each $ \pm \im \omega_*^{(\tp)}(\tth)  $ lies in the spectrum of the operator $\cL_{\umu,0}(\tth) $ only for {\it one} particular real value of the Floquet exponent $$
 \umu=\uphi(\tp,\tth) > 0  
 $$
 and its integer  shifts $ \umu +k$, 
 $k\in \bZ$, 
 see \cite{BCMV}[Lemma 2.7].
 In other words at $ \umu = \underline{\varphi} (\tp,\tth) $,
 mod $ 1$, the unperturbed spectral bands of $ \cL_{\mu,0} 
 (\tth) $ intersect. 
 The values $ \pm \im \omega_*^{(\tp)}(\tth) $ are the branching points of the $\tp$-th isolas stated in Theorem \ref{thm:main2new} below.  
We introduce a notation.

\begin{itemize} \item  {\bf Notation}.
  We denote by $r(\nu^{m_1}\e^{n_1},\dots,\nu^{m_q}\e^{n_q}) $  
  a 
scalar  real analytic function  that  satisfies,  for some $ C > 0 $ and $(\nu, \e)$ small, the bound
 $ |  r (\nu^{m_1}\e^{n_1},\dots,\nu^{m_q}\e^{n_q}) |   \leq C \sum_{j = 1}^q|\nu |^{m_j}|\e|^{n_j}\, 
 . $ For any  $\delta > 0 $, we denote by $B_\delta(x)$ the real interval $(x-\delta,x+\delta)$ centered at $x$.
\end{itemize}

The  next result gives a precise description of the  spectrum  near $\im \omega_*^{(\tp)}(\tth)$ of the operator $ \cL_{\mu,\e} (\tth) $,  
for any $(\mu, \e)$ sufficiently close to $(\umu,0)$, value  at which
$\cL_{\umu,0}(\tth) $ has a double eigenvalue.

\begin{teo}\label{thm:main2} 
{\bf (Unstable spectrum \cite{BCMV})} For any integer $\tp \in \bN $, $\tp  \geq 2$, for any $ \tth >0 $ let $\umu
= \uphi(\tp,\tth) > 0 $  such that the operator $ \cL_{\umu,0}(\tth)$ in \eqref{WW} has a 
double eigenvalue at $\im \omega^{(\tp)}_* (\tth) $.   
Then there exist
$\e_1^{(\tp)}, \delta_0^{(\tp)}>0$ and real  analytic functions $\mu_0^{(\tp)},\mu_\vee^{(\tp)},\mu_\wedge^{(\tp)} \colon [0, \e_1^{(\tp)}) \to B_{\delta_0^{(\tp)}}(\umu)$  satisfying 
$$
\mu_0^{(\tp)} (0) =  \mu_{\wedge}^{(\tp)}(0) = \mu_\vee^{(\tp)}(0) =\umu
$$ 
and, for any 
$ 0 < \e < \e_1^{(\tp)}   $, 
 \begin{equation}\label{mupm}
  \mu_\wedge^{(\tp)}(\e) < \mu_0^{(\tp)}(\e) < \mu_\vee^{(\tp)}(\e)\, ,\quad  |\mu_{\wedge,\vee}^{(\tp)}(\e)- \mu_0^{(\tp)}(\e) |   =   \frac{2 |\beta_1^{(\tp)}(\tth) |}{T_1^{(\tp)} (\tth) } \e^\tp+ r(\e^{\tp+1})\, ,
 \end{equation}
(see Figure \ref{fig:instareg}) where   $T_1^{(\tp)}(\tth) > 0 $ and $\beta_1^{(\tp)}(\tth) \not\equiv 0 $ are real 
analytic functions defined for any
$ \tth > 0 $,      such that the following holds true: 

for any $(\mu, \e) \in B_{\delta_0^{(\tp)}}(\umu) \times B_{\e_1^{(\tp)}}(0)$ 
the operator $ \cL_{\mu,\e} (\tth) $ possesses a pair of eigenvalues  of the form  
\begin{align}\label{final.eig}
&\lambda^\pm_\tp \big(\mu, \e \big) 
=\begin{cases} \im \omega_*^{(\tp)}+ \im s^{(\tp)}(\mu,\e)  \pm \dfrac12 \sqrt{ D^{(\tp)}\big(\mu,\e\big)}  &\mbox{if }  \mu \in \big( \mu_\wedge^{(\tp)}(\e) , \mu_\vee^{(\tp)} (\e)\big) \, , \\[3mm]
\im \omega_*^{(\tp)}+ \im  s^{(\tp)}(\mu,\e) 
 \pm \im\sqrt{ \big|D^{(\tp)}\big(\mu,\e\big)\big| } &\mbox{if } \mu \notin \big( \mu_\wedge^{(\tp)} (\e) , \mu_\vee^{(\tp)} (\e)\big)\, ,
\end{cases} 
\end{align} %at  $(\mu,\e) = (\mu_0^{(\tp)}(\e) + \nu, \e)$
 where $s^{(\tp)}(\mu,\e)$, $D^{(\tp)}(\mu,\e)$ are  real-analytic functions of the form 
\begin{equation}\label{degenerateexpD0}
\begin{aligned}
& s^{(\tp)}(\mu_0^{(\tp)} (\e)+\nu,\e) = r(\e^2,\nu)\, , \\
& D^{(\tp)}(\mu_0^{(\tp)} (\e)+\nu,\e) =  4(\beta_1^{(\tp)}(\tth))^2 \e^{2\tp} -  (T_1^{(\tp)}(\tth))^2\nu^2+r(\e^{2\tp+1},\nu\e^{2\tp},\nu^2\e,\nu^3)\, .
\end{aligned}
\end{equation}
 For any $ \tth > 0 $ such that 
 $ \beta_1^{(\tp)} (\tth) \neq 0 $  the function 
$ D^{(\tp)}(\mu_0^{(\tp)} (\e)+\nu,\e) $
is positive in $\big(\mu_\wedge^{(\tp)} (\e), \mu_\vee^{(\tp)} (\e)\big)$, vanishes at $\mu =\mu_{\wedge,\vee}^{(\tp)} (\e) $ and is negative outside  $\big(\mu_\wedge^{(\tp)} (\e), \mu_\vee^{(\tp)} (\e)\big)$. 
For any  fixed $\e \in (0,\e_1^{(\tp)})$ as $\mu$ varies in $\big(\mu_\wedge^{(\tp)}(\e),\mu_\vee^{(\tp)}(\e)\big)$
 the pair of unstable eigenvalues $\lambda^\pm_\tp(\mu,\e)$ in \eqref{final.eig} describes 
 %the $\tp$-th positive isola of $\sigma_{L^2(\bR)}(\cL_{\e})$, 
 %which is  
 a closed analytic  curve 
 in the complex  plane 
 $ \lambda = x + \im y $
 that
  intersects orthogonally the imaginary axis,  
  encircles a convex region, and it is symmetric 
  with respect to $y $-axis.
  Such isola is
 approximated by an  ellipse
 \begin{equation}\label{isolaintro}
x^2  + 
E^{(\tp)}(\tth)^2(1+r(\e^2))(y-y_0^{(\tp)}(\e))^2 =   \beta_1^{(\tp)}(\tth)^2 \e^{2\tp} (1+r(\e)) 
\end{equation}
where $ E^{(\tp)}(\tth) 
\in (0,1) $ is
a real analytic function  for any  
$ \tth > 0 $
and $ y_0^{(\tp)} (\e) $
is $ O(\e^2) $-close to $ \omega_*^{(\tp)} (\tth) $.  
 \end{teo}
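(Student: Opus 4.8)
The plan is to reduce the spectral analysis near the isolated double eigenvalue $\im\omega_*^{(\tp)}(\tth)$ of $\cL_{\umu,0}(\tth)$ to the study of an analytic family of $2\times 2$ matrices via Kato's similarity perturbation theory, and then to exploit the Hamiltonian and reversible structure of $\cL_{\mu,\e}(\tth)$ to put this family in a normal form from which \eqref{mupm}--\eqref{isolaintro} can be read off. First, since by \cite{BCMV}[Lemma 2.9] the eigenvalue $\im\omega_*^{(\tp)}$ of $\cL_{\umu,0}$ has geometric and algebraic multiplicity two and is isolated from the rest of the spectrum, for all $(\mu,\e)$ in a small enough neighborhood of $(\umu,0)$ the Riesz projector $P_{\mu,\e}:=\tfrac{1}{2\pi\im}\oint_\Gamma(z-\cL_{\mu,\e}(\tth))^{-1}\,\de z$ onto the spectral subspace enclosed by a fixed small circle $\Gamma$ around $\im\omega_*^{(\tp)}$ is well defined, of rank two and analytic in $(\mu,\e)$; this fixes $\e_1^{(\tp)}$ and $\delta_0^{(\tp)}$. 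Composing with Kato's transformation operator one represents the restriction of $\cL_{\mu,\e}$ to the fixed two-dimensional space $\mathrm{Ran}\,P_{\umu,0}$ by an analytic $2\times 2$ matrix $\tL_{\mu,\e}$ whose two eigenvalues are exactly the two eigenvalues of $\cL_{\mu,\e}$ near $\im\omega_*^{(\tp)}$. Choosing a symplectic basis of $\mathrm{Ran}\,P_{\umu,0}$, the Hamiltonian character of $\cL_{\mu,\e}$ makes $\tL_{\mu,\e}$ the product of the standard $2\times 2$ symplectic matrix and a Hermitian matrix, so that $\tL_{\mu,\e}-\im\omega_*^{(\tp)}\uno$ has purely imaginary trace $2\im s^{(\tp)}(\mu,\e)$ with $s^{(\tp)}$ real analytic, and a real-valued discriminant that we call $D^{(\tp)}(\mu,\e)$; the two eigenvalues are then $\im\omega_*^{(\tp)}+\im s^{(\tp)}(\mu,\e)\pm\tfrac12\sqrt{D^{(\tp)}(\mu,\e)}$, purely imaginary where $D^{(\tp)}<0$ and with nonzero real part where $D^{(\tp)}>0$, which is \eqref{final.eig}. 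The reversibility of $\cL_{\mu,\e}$ together with the reality relation $\overline{\cL_{\mu,\e}}=\cL_{-\mu,\e}$ further forces $s^{(\tp)}$ to carry no $O(\e)$ term.

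Next I would compute the Taylor expansion of $D^{(\tp)}$ and $s^{(\tp)}$; this is the computational core. At $\e=0$ the matrix $\tL_{\mu,0}$ is diagonal, with entries the two unperturbed eigenvalues $\im\omega^{\sigma_1}(j_1+\mu,\tth)$ and $\im\omega^{\sigma_2}(j_2+\mu,\tth)$ of $\cL_{\mu,0}(\tth)$ that collide at $\mu=\umu$, so $D^{(\tp)}(\mu,0)=-\big(\omega^{\sigma_1}(j_1+\mu,\tth)-\omega^{\sigma_2}(j_2+\mu,\tth)\big)^2=-T_1^{(\tp)}(\tth)^2(\mu-\umu)^2+O((\mu-\umu)^3)$, where $T_1^{(\tp)}(\tth):=\big|\pa_\mu\big(\omega^{\sigma_1}(j_1+\cdot,\tth)-\omega^{\sigma_2}(j_2+\cdot,\tth)\big)(\umu)\big|>0$, positivity being equivalent to the transversality of the band crossing, i.e. to the crossing eigenvalue being exactly double. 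For $\e\neq 0$ one inserts the analytic-in-$\e$ expansions of $a_\e,p_\e,\ttf_\e$ into $\cL_{\mu,\e}$, expands $P_{\mu,\e}$, Kato's transformation and hence $\tL_{\mu,\e}$ in powers of $\e$, and performs the bookkeeping of which Fourier modes couple: the $k$-th harmonic of the Stokes wave couples two unperturbed Fourier modes only if their indices differ by $k$, and the colliding modes at $\umu$ have indices differing by $\tp$, so the off-diagonal (coupling) entry of the Hermitian factor first appears at order $\e^\tp$, with coefficient an explicit real analytic function $\beta_1^{(\tp)}(\tth)$ of the $\tp$-th order Stokes coefficients and the unperturbed eigenvectors, contributing $+4\,\beta_1^{(\tp)}(\tth)^2\e^{2\tp}$ to $D^{(\tp)}$. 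Defining $\mu_0^{(\tp)}(\e)$ by $\pa_\nu D^{(\tp)}(\mu_0^{(\tp)}(\e)+\nu,\e)\big|_{\nu=0}=0$ — solvable analytically with $\mu_0^{(\tp)}(0)=\umu$ by the implicit function theorem — and collecting orders while using reversibility to rule out the forbidden intermediate terms, one obtains $s^{(\tp)}(\mu_0^{(\tp)}(\e)+\nu,\e)=r(\e^2,\nu)$ and the four-term expansion of $D^{(\tp)}$ in \eqref{degenerateexpD0}.

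Given these expansions, the geometry of the isola follows. By \eqref{degenerateexpD0} the map $\nu\mapsto D^{(\tp)}(\mu_0^{(\tp)}(\e)+\nu,\e)$ is a small analytic perturbation of the downward parabola $4\,\beta_1^{(\tp)}(\tth)^2\e^{2\tp}-T_1^{(\tp)}(\tth)^2\nu^2$, so when $\beta_1^{(\tp)}(\tth)\neq0$ it has two simple zeros $\nu=\pm\frac{2|\beta_1^{(\tp)}(\tth)|}{T_1^{(\tp)}(\tth)}\e^\tp(1+r(\e))$ — defining the analytic functions $\mu_{\wedge}^{(\tp)},\mu_\vee^{(\tp)}$ and giving \eqref{mupm} — with $D^{(\tp)}>0$ exactly on $\big(\mu_\wedge^{(\tp)}(\e),\mu_\vee^{(\tp)}(\e)\big)$. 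Writing $\lambda^\pm_\tp=x+\im y$ with $x=\pm\tfrac12\sqrt{D^{(\tp)}}$, $y=\omega_*^{(\tp)}(\tth)+s^{(\tp)}(\mu,\e)$, as $\mu$ runs over $\big(\mu_\wedge^{(\tp)}(\e),\mu_\vee^{(\tp)}(\e)\big)$ the two branches $x=\pm\tfrac12\sqrt{D^{(\tp)}}$ match at the endpoints $x=0$ and, after the analytic change of parameter sending $\nu$ near each endpoint to the signed square root of the local factor of $D^{(\tp)}$, they trace a closed analytic curve; it is symmetric under $x\mapsto-x$ by construction, and since $\de x/\de\nu\to\infty$ while $\de y/\de\nu$ stays bounded at the endpoints, it meets the imaginary axis orthogonally there. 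Eliminating $\nu$ between the nearly affine relation $y-y_0^{(\tp)}(\e)=c^{(\tp)}(\tth)\,\nu+r(\nu\e^2,\nu^2)$, with $y_0^{(\tp)}(\e):=\omega_*^{(\tp)}(\tth)+s^{(\tp)}(\mu_0^{(\tp)}(\e),\e)=\omega_*^{(\tp)}(\tth)+r(\e^2)$ and $c^{(\tp)}(\tth):=\pa_\mu s^{(\tp)}(\umu,0)$, and the nearly parabolic relation $x^2=\beta_1^{(\tp)}(\tth)^2\e^{2\tp}-\tfrac14 T_1^{(\tp)}(\tth)^2\nu^2+r(\e^{2\tp+1},\nu\e^{2\tp},\nu^2\e,\nu^3)$, one gets the ellipse \eqref{isolaintro} up to relative errors $r(\e)$, with $E^{(\tp)}(\tth)=T_1^{(\tp)}(\tth)/\big(2|c^{(\tp)}(\tth)|\big)$ real analytic, and a direct check (amounting to the two group velocities at the crossing having the same sign) gives $E^{(\tp)}(\tth)\in(0,1)$; convexity of the enclosed region follows because the curve is a small $C^2$ perturbation of an ellipse on a compact parameter interval.

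The main obstacle is the expansion step: one must run $\tp$ orders of degenerate perturbation theory at once, identify the resonant coupling coefficient $\beta_1^{(\tp)}(\tth)$ exactly, and — crucially — show that no intermediate-order contribution spoils the parabolic profile of $D^{(\tp)}$, so that its remainder genuinely has the mixed scaling $r(\e^{2\tp+1},\nu\e^{2\tp},\nu^2\e,\nu^3)$ of \eqref{degenerateexpD0}; this scaling is exactly what makes the $\tp$-th isola open up at width $\propto\e^\tp$ rather than collapse. Note finally that the nondegeneracy $\beta_1^{(\tp)}(\tth)\not\equiv0$ — needed for the isola to be genuinely present — does not follow from this analysis and requires a separate limiting study (the limit $\tth\to0^+$ carried out in \cite{BCMV}, and the deep-water limit $\tth\to+\infty$ which is the subject of the present manuscript).
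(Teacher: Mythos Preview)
Your outline is essentially correct and matches the strategy actually used in \cite{BCMV}: Kato reduction to a two-dimensional invariant subspace, choice of a symplectic-reversible basis so that the reduced $2\times 2$ matrix has purely imaginary trace and real discriminant, the Fourier-coupling argument showing that the off-diagonal entry of the reduced matrix first appears at order $\e^\tp$ (because the two colliding modes differ in wavenumber by $\tp$), the implicit-function definition of $\mu_0^{(\tp)}(\e)$ as the critical point of $D^{(\tp)}$, and the elimination of $\nu$ between the nearly-affine imaginary part and the nearly-parabolic real part to obtain the ellipse. You have also correctly identified the delicate point: controlling the remainder of $D^{(\tp)}$ so that no term of order $\e^k\nu^0$ with $k<2\tp$ survives --- this is where the symplectic/reversible bookkeeping does the real work.

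One clarification, however: Theorem~\ref{thm:main2} is \emph{not} proved in the present manuscript. It is quoted verbatim from \cite[Theorem~1.1]{BCMV}, and the paper offers no proof of it here --- the new content of this note is Theorem~\ref{thm:main2new}, the deep-water asymptotics of $\beta_1^{(\tp)}(\tth)$ for $\tp=2,3,4$. So there is no ``paper's own proof'' of Theorem~\ref{thm:main2} to compare your proposal against in this text; for that you would have to go to \cite{BCMV} itself (and the earlier \cite{BMV1,BMV3,BMV4} for the $\tp=2$ prototype), where indeed the argument proceeds along the lines you describe.
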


Let us make some comments.

 \begin{figure}[h!!]
\centering
\includegraphics[width=7cm]{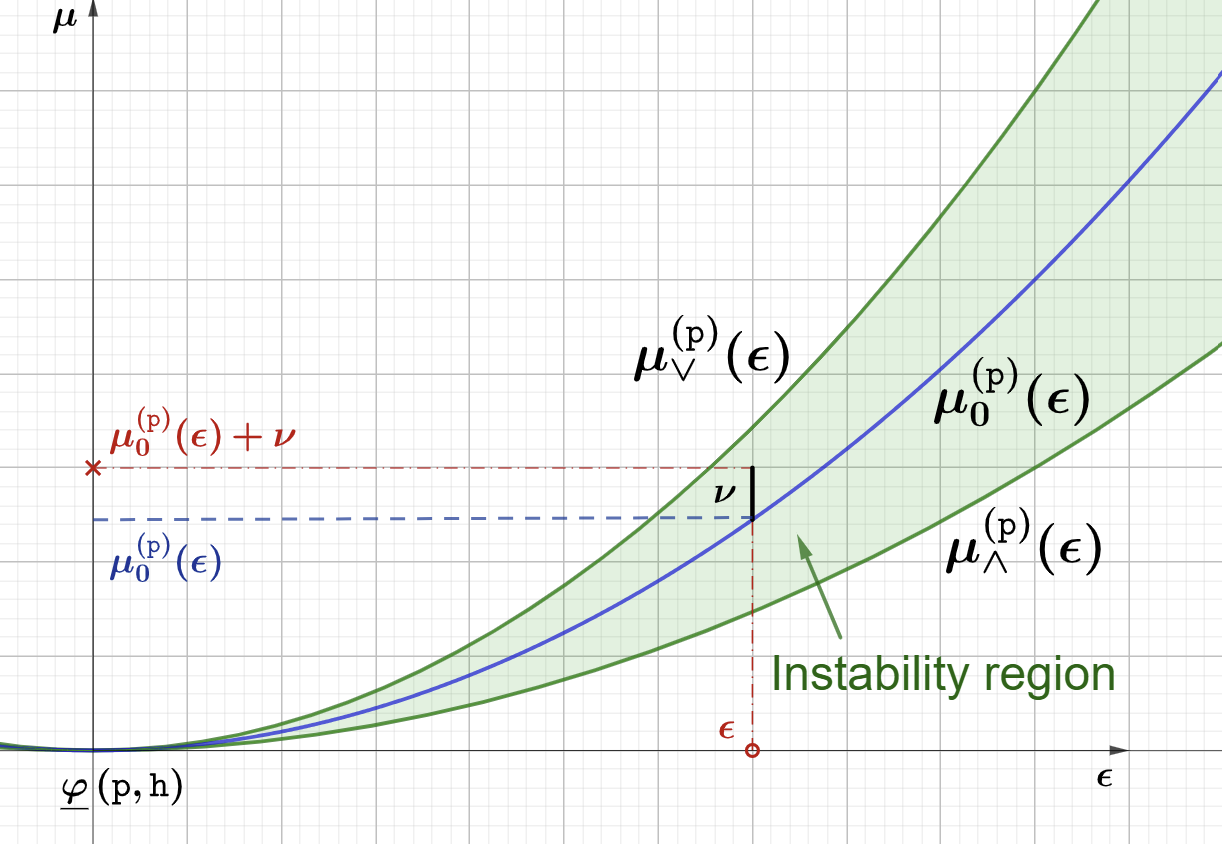}
\caption{ \label{fig:instareg} The instability region around the curve $\mu_0^{(\tp)}(\e)$  delimited by  the curves 
$\mu_\wedge^{(\tp)}(\e)$ and $\mu_\vee^{(\tp)}(\e)$.}
\end{figure}

\begin{enumerate}
\item {\bf Upper bounds:} 
In view of \eqref{degenerateexpD0} and \eqref{mupm}, for any $\tp \geq 2$ and  any depth $\tth>0$,
the real part of the eigenvalues
\eqref{final.eig}
 is at most of size $ O(\e^\tp) $
implying  that the isolas, if ever exist, 
 shrink exponentially fast as 
 $ \tp  \to + \infty $.  
\item 
{\bf 
Lower bounds:} 
In view of
\eqref{final.eig}, \eqref{degenerateexpD0}, 
for any $ \tp \geq 2$ and  any depth $\tth>0$, a sufficient condition for the existence 
of the $\tp$-th instability isola  is that 
$$
\beta_1^{(\tp)}(\tth) \neq 0 \, . 
$$
In such a case
the real part of the eigenvalues
\eqref{final.eig}
 is  of size $  |\beta_1^{(\tp)} (\tth)| \e^\tp+ O(\e^{\tp+1} )$. 
 %The most unstable eigenvalue is reached at $\nu \propto\!\e^{2\tp}$. 
 Furthermore, by \eqref{mupm},  the portion of the 
 unstable band is parametrized by Floquet exponents 
in  the interval
 $ \big( \mu_{\wedge}^{(\tp)}(\e),\mu_{\vee}^{(\tp)}(\e)\big) $ which has exponentially small
 width 
 $ \sim  \frac{4|\beta_1^{(\tp)}(\tth) |}{T_1^{(\tp)}} 
 \e^{\tp} $. 
\item
{\bf 
The function $\beta_1^{(\tp)}(\tth) $:}
It is % analytically 
proved in 
\cite[Proposition 6.6]{BCMV} that, 
for {\it any} $\tp\geq 2 $, the map 
$\tth \mapsto \beta^{(\tp)}_1(\tth)$  is  {\em real analytic} and  
\begin{equation} \label{limbetap}
\lim_{\tth\to 0^+}
\beta_1^{(\tp)}
(\tth) = - \infty \, , 
\qquad
 \qquad  
 \lim_{\tth\to+ \infty}
\beta_1^{(\tp)}
(\tth) = 0 \, , 
\end{equation}
implying, in particular,  that  $  \beta_1^{(\tp)}
(\tth) $  is  not 
identically zero.
For $\tp= 2,3,4 $ the graphs of 
$ \beta_1^{(\tp)}
(\tth) $ are  numerically plotted   in Figures 
\ref{plotb1p2}-\ref{plotb1p4}.
The challenging proof in  \cite{BCMV} 
that
$ \lim_{\tth\to 0^+}
\beta_1^{(\tp)}
(\tth) = - \infty $, for any 
$ \tp \geq 2 $,  is accomplished 
exhibiting the {\it second-order}  
asymptotic expansion of  
$ \beta_1^{(\tp)} (\tth) $ as $ \tth \to 0^+ $.  
The fact that $ \lim_{\tth\to+ \infty}
\beta_1^{(\tp)}
(\tth) = 0 $ 
follows by subtle cancellations in 
the deep water limit. Both limits 
ultimately stem  from  
 astonishing 
structural properties of the Taylor-Fourier coefficients 
% $ \eta_{\ell}^{[\ell]},  \psi_{\ell}^{[\ell]} $
of the Stokes waves 
at any order proved in \cite{BCMV}. 
\begin{figure}[h!]
 \centering
\includegraphics[width=7cm]{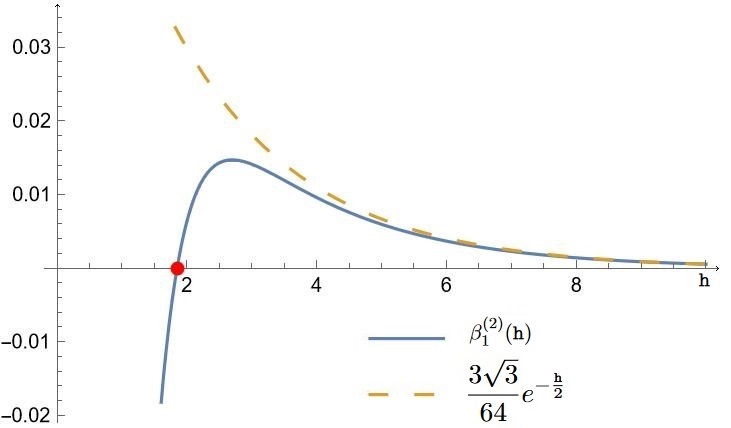}
\caption{ \label{plotb1p2} Case $\tp= 2 $. 
Plot of the function $\beta_1^{(2)}(\tth)$  (in blue), vanishing
at the red point 
$\tth_*\approx 1.84940 $. %see \cite{CDT,HY}
In yellow the leading part of its asymptotic expansion in the deep-water limit.}
\end{figure}
\begin{figure}[h!]
 \centering
\includegraphics[width=7cm]{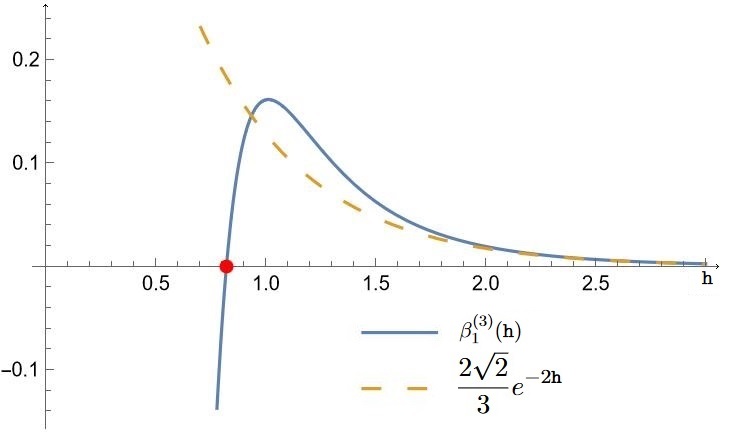}
\caption{ \label{plotb1p3} Case $\tp=3$. Plot of the function $\beta_1^{(3)}(\tth)$ (in blue), vanishing at the red point  $\tth_*\approx 0.82064 $. 
%coherently with  \cite{CDT}
In yellow  the leading part of its asymptotic expansion in the deep-water limit.}
\end{figure}
\begin{figure}[h!!!]\centering \subcaptionbox*{}[.45\textwidth]{\includegraphics[width=4cm]{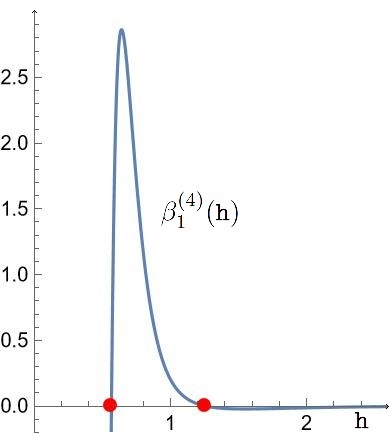}}
\subcaptionbox*{}[.45\textwidth]{
\includegraphics[width=7cm]{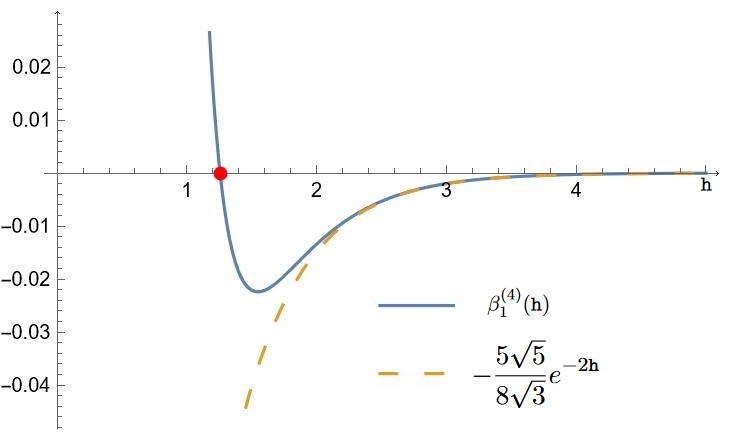} }
 \vspace{-0.8cm}
\caption{\label{plotb1p4} Case $\tp= 4 $. On the left: plot of the function $\beta_1^{(4)}(\tth)$  (in blue) vanishing at the red points $\tth_*\approx 0.566633$  and $\tth_{**}\approx 1.255969$. On the right: zoom on the asymptotic behavior of $\beta_1^{(4)}(\tth)$  in blue, in yellow  the leading part of its expansion in the deep-water limit.
}
\end{figure}

\item
{\bf 
Critical depths:} 
%For any depth $\tth $ outside %$(\beta_1^{(\ell)})^{-1}(0) $,  where 
%$\ell = 2, \ldots, \tp $,
%the $\ell $-th  isola of instability  
%exists and has   
%elliptical shape. 
If  $ \beta_1^{(\tp)} (\tth) = 0 $ 
(this may happen only at a closed set of isolated points) 
it is necessary to further  expand the discriminant $D^{(\tp)}(\mu,\e)$
in \eqref{degenerateexpD0}
to determine its sign. The latter degeneracy occurs
at  
$ \tth = + \infty $ for {\em every} $\tp \geq 2 $ since  $ \lim_{\tth \to + \infty}
\beta_1^{(\tp)} (\tth) = 0 $.
%of the Stokes waves
%coefficients at any order proved in \cite{BCMV}, 
%namely that $ \eta_{\ell}^{[\ell]} = \psi_{\ell}^{[\ell]} $ for any $ \ell \in \bN $. 
For the isola $\tp = 2 $  the higher order expansion required to detect the unstable isola 
in infinite depth
is performed 
in \cite{BMV4}.
\end{enumerate}

\section{Main result}

In this paper we  complement  the above results
showing the behaviour of the function 
$ \beta_1^{(\tp)} (\tth) $ 
for $ \tth \to + \infty $ in the cases 
$ \tp = 2,3,4 $.  
By \eqref{limbetap} 
we  know that
$$
\lim_{\tth \to +\infty}\beta_1^{(\mathtt{p})}(\tth) = 0 \, , \qquad \forall \mathtt{p} \geq 2 \, . 
$$  
Actually 
$ \beta_1^{(\mathtt{p})}(\tth) $ tends to zero exponentially fast. It is expected to converge to zero without oscillations as $ \tth \to +\infty $.
This would imply the Conjecture \ref{conjinf} stated below, that we prove for $ \mathtt{p} =2,3,4 $.

Before stating the main result, let us introduce the following standard notation.  \smallskip

\noindent{\bf Notation}. Let $f,g:(0,+\infty)\to \bR $
be two real functions with $f$ strictly positive, and let $x_0 \in[0,+\infty]$. We denote 
\begin{itemize}
    \item $g=O(f)$ as $x\to x_0 $ if there exists $C>0$ such that $|g(x)| \leq C f(x) $ for any $x$ near $x_0$ ;
    \item $g=o(f)$ as $x\to x_0 $ if $\lim\limits_{x\to x_0} \dfrac{g(x)}{f(x)} = 0 $ .
\end{itemize}

The next result provides the sharp asymptotic expansion of  $\beta_1^{(\mathtt{p})}(\tth) $ as $\tth\to +\infty$ in the cases $\mathtt{p}=2, 3, 4  $. 
\begin{teo} \label{thm:main2new} 
The coefficients $\beta_1^{(p)} (\tth) $, 
$ \mathtt{p} = 2,3, 4 $ admit,
as $ \tth\to +\infty $, 
the  asymptotic expansions 
\begin{align}
& \beta_1^{(2)}(\tth) = 
\frac{3 \sqrt{3} }{64} e^{-\frac{\tth}{2}}+O\big(e^{-\frac{3}{4}\tth} \big) \, , \label{expbeta1p=2}  \\
& \label{expbeta1p=3}
\beta_1^{(3)} (\tth) = 
\frac{2 \sqrt{2} }{3} e^{-2\tth} + O\big(e^{-3\tth}\big) \, ,  \\
& \label{expbeta1p=4}
\beta_1^{(4)}(\tth)  = 
-\frac{5\sqrt{5}}{8\sqrt{3}}   e^{-2\tth} + O\big(e^{-4\tth}\big) \, . 
 \end{align}
\end{teo}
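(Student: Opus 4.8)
The plan is to obtain closed (or at least tractable) expressions for $\beta_1^{(\tp)}(\tth)$ from the formula derived in \cite{BCMV}, and then expand them carefully as $\tth\to+\infty$. Recall from the discussion after \eqref{limbetap} that $\beta_1^{(\tp)}(\tth)$ is built from the Taylor--Fourier coefficients of the Stokes wave up to order $\tp$ together with the resonance data $\uphi(\tp,\tth)$ and $\omega_*^{(\tp)}(\tth)$; for the small, fixed values $\tp=2,3,4$ these ingredients are finitely many explicit analytic functions of $\tth$. So the first step is bookkeeping: write $\beta_1^{(\tp)}(\tth)$ as a finite rational--trigonometric expression in $\tth$, $e^{-\tth}$, and the relevant $\tanh$, $\mathrm{sech}$ factors, substituting the explicit low-order Stokes coefficients $a_\e,p_\e,\ttf_\e$ (their $\tth$-dependence is classical, cf. \cite{NS,BMV1,BMV3}) and the explicit solution $\umu=\uphi(\tp,\tth)$ of the resonance equation $\omega^+(j+\mu,\tth)=\omega^-(j'+\mu,\tth)$ that produces the double eigenvalue at $\im\omega_*^{(\tp)}(\tth)$.

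Second, I would analyze the resonance/branch data in the deep-water limit. As $\tth\to+\infty$ one has $\Omega(\varphi,\tth)=\sqrt{\varphi\tanh(\tth\varphi)}\to\sqrt{|\varphi|}$ with exponentially small corrections $O(e^{-2\tth|\varphi|})$, and $\ch=\sqrt{\tanh\tth}\to 1$ likewise. Hence $\uphi(\tp,\tth)$, $\omega_*^{(\tp)}(\tth)$, and the auxiliary quantities $T_1^{(\tp)},E^{(\tp)}$ all converge to their infinite-depth values with exponentially small error; I would compute those limits and the leading exponential corrections explicitly for $\tp=2,3,4$. The key point — and the reason the statement is delicate — is that $\beta_1^{(\tp)}$ does \emph{not} converge to a nonzero constant: the infinite-depth leading terms must cancel, so the whole answer is exponentially small. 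The expansion therefore has to be carried out to sufficient order that the surviving non-cancelling term is captured; this is exactly the ``subtle cancellations'' phenomenon flagged in the text, and tracking it is the main obstacle. Concretely, I expect to need the structural identities for Stokes Taylor--Fourier coefficients from \cite{BCMV} to see algebraically \emph{why} the $O(1)$ (and in the $\tp=3,4$ cases also intermediate) contributions vanish, rather than relying on a brute-force series that would be numerically unstable.

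Third, once the leading non-vanishing exponential order is identified for each $\tp$, I would read off the constant. The predicted orders $e^{-\tth/2}$ for $\tp=2$ and $e^{-2\tth}$ for $\tp=3,4$ are consistent with the heuristic that the dominant correction comes from the smallest Fourier mode $\varphi$ appearing in the resonance, entering through $\tanh(\tth\varphi)=1-2e^{-2\tth\varphi}+\cdots$: for $\tp=2$ the relevant $\varphi$ is near $1/4$, for $\tp=3,4$ near $1$. I would match the coefficient of that first correction, propagate it through the (now explicit) formula for $\beta_1^{(\tp)}$, and collect to get $\tfrac{3\sqrt3}{64}$, $\tfrac{2\sqrt2}{3}$, $-\tfrac{5\sqrt5}{8\sqrt3}$ respectively, while bounding all further terms by the next exponential, yielding the stated $O(e^{-3\tth/4})$, $O(e^{-3\tth})$, $O(e^{-4\tth})$ remainders. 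Finally I would double-check each constant against the numerical plots in Figures \ref{plotb1p2}--\ref{plotb1p4}, since the yellow curves there are precisely these leading terms; agreement is a strong consistency test. The sign in \eqref{expbeta1p=4} is worth isolating as a separate sanity check, as it is the one place a cancellation could plausibly flip the leading constant.
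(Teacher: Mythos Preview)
Your outline is essentially the paper's own approach: write $\beta_1^{(\tp)}(\tth)$ as the finite signed sum from \cite[(5.7)--(5.8)]{BCMV}, expand $\ch$, $\uphi(\tp,\tth)$, $\Omega_j^{(\tp)}$, $t_j^{(\tp)}$, $a_\ell^{[\ell]}$, $p_\ell^{[\ell]}$ to the first nontrivial exponential order, and verify that the $O(1)$ pieces cancel in the signed combinations (as guaranteed by \cite[Lemma 6.7]{BCMV}) so that the displayed constants survive. The paper carries this out term by term (9 terms for $\tp=3$, 27 for $\tp=4$), with Mathematica assistance for $\tp\ge 3$; your proposal is the same plan, just not executed.

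One correction to your heuristic: for $\tp=4$ the smallest mode is $\uphi(4,\tth)\to 9/4$, not $1$, so the $\tanh$--correction from the smallest argument is $O(e^{-9\tth/2})$, far below the claimed $e^{-2\tth}$. The leading $e^{-2\tth}$ for $\tp=4$ actually comes from $\ch=1-e^{-2\tth}+\cdots$ and from the correction $\uphi(4,\tth)=\tfrac94-\tfrac{15}{2}e^{-2\tth}+\cdots$ to the wavenumber itself, not from any $\tanh(\tth\varphi)$ factor. This does not affect the final answer, but if you implement your plan literally (looking only at $\tanh$-corrections at the smallest mode) you will miss the dominant contribution in the $\tp=4$ case.
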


The latter result provides another proof that 
the analytic function $ \tth \mapsto \beta_1^{(\mathtt{p})} (\tth)  $
is not identically zero if $ \mathtt{p} = 2,3,4$. Moreover it rules out the 
existence of infinitely many zeros of  $\beta_1^{(\tp)} (\tth) $ accumulating at 
% one of the extrema of its interval of definition $(0,+
$  \infty$. 
%\begin{itemize}
%  \item for any integer $\tp\geq 2$ as $\tth\to0^+$, see \cite[Theorem 6.1]{BCMV};
%\item for $\tp = 2$, $3$ and $4$ as $\tth\to +\infty$, thanks to Theorem \ref{thm:main2new}.
%\end{itemize}
Since $\beta_1^{(\tp)} (\tth ) $ depends analytically on $\tth$, we deduce that $\beta_1^{(\tp)}(\tth) $ vanishes only at {\it finitely} many values of the depth $ \tth > 0 $ for $\tp=2,3,4$.
We make the following  conjecture. 
\begin{conj}\label{conjinf} For any integer $\tp\geq 2$ the function  $\tth \mapsto \beta_1^{(\mathtt{p})} (\tth)$ has a finite number of zeros in $(0,+\infty) $.
\end{conj}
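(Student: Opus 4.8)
The plan is to reduce the global finiteness statement to the behaviour of $\beta_1^{(\tp)}$ at the two endpoints of $(0,+\infty)$ and then to control each endpoint separately. Since $\beta_1^{(\tp)}(\tth)$ is real analytic on $(0,+\infty)$ and, by the first limit in \eqref{limbetap}, not identically zero, its zero set is discrete; consequently $\beta_1^{(\tp)}$ has only finitely many zeros in every compact subinterval $[\delta,M]\subset(0,+\infty)$. Infinitely many zeros can therefore only arise through accumulation at $\tth\to 0^+$ or at $\tth\to+\infty$, and the whole problem splits into excluding these two accumulations.

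The endpoint $\tth\to 0^+$ is immediate. By \eqref{limbetap} we have $\lim_{\tth\to0^+}\beta_1^{(\tp)}(\tth)=-\infty$, so there is $\delta>0$ with $\beta_1^{(\tp)}(\tth)<0$ for all $\tth\in(0,\delta)$; in particular no zero lies in $(0,\delta)$, and accumulation at $0$ is impossible. This step uses only the already-established shallow-water limit, not its refined expansion.

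The heart of the matter is the endpoint $\tth\to+\infty$, where $\beta_1^{(\tp)}\to 0$ and zeros could in principle accumulate if the function oscillated while decaying. To exclude this I would prove that $\beta_1^{(\tp)}$ is eventually of one sign, by producing a \emph{sharp} deep-water asymptotic with nonvanishing leading coefficient,
$$
\beta_1^{(\tp)}(\tth)=c_\tp\,e^{-\alpha_\tp\tth}\big(1+o(1)\big),\qquad \tth\to+\infty,\quad c_\tp\neq 0,
$$
for every $\tp\ge 2$. Such an expansion forces $\sgn\big(\beta_1^{(\tp)}(\tth)\big)=\sgn(c_\tp)$ for all large $\tth$, hence a zero-free neighbourhood $(M,+\infty)$ of infinity. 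Combining the two zero-free neighbourhoods $(0,\delta)$ and $(M,+\infty)$ with the compact-interval finiteness from analyticity would confine all zeros to $[\delta,M]$ and close the argument. For $\tp=2,3,4$ this final input is exactly Theorem \ref{thm:main2new}, whose leading coefficients $\tfrac{3\sqrt3}{64}$, $\tfrac{2\sqrt2}{3}$, $-\tfrac{5\sqrt5}{8\sqrt3}$ are nonzero — which is precisely why the conjecture is already a theorem in those three cases.

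To obtain the required expansion for general $\tp$ I would start from the explicit representation of $\beta_1^{(\tp)}(\tth)$ in terms of the order-$\tp$ Taylor--Fourier coefficients of the Stokes wave and of the eigendata of $\cL_{\umu,0}(\tth)$, insert the deep-water behaviour $\tanh(\tth\varphi)=1-2e^{-2\tth\varphi}+O(e^{-4\tth\varphi})$ together with the analogous expansions of $\Omega(\varphi,\tth)$, of the resonant Floquet value $\umu=\uphi(\tp,\tth)$ and of $\omega_*^{(\tp)}(\tth)$, and identify the slowest-decaying surviving contribution. The main obstacle — and the reason this remains a conjecture rather than a theorem — is to show that, after the subtle cancellations already responsible for $\lim_{\tth\to+\infty}\beta_1^{(\tp)}=0$, the leading coefficient $c_\tp$ does not itself vanish, and to do so uniformly in $\tp$. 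A further difficulty is that the exponents $e^{-2\tth(k+\umu_\infty)}$ attached to different Fourier modes, with $\umu_\infty=\lim_{\tth\to+\infty}\uphi(\tp,\tth)$, are generically incommensurable, so one cannot simply regard $\beta_1^{(\tp)}$ as analytic in a single variable $e^{-c\tth}$ and invoke isolated zeros of an analytic function; the competing exponential scales must be ordered and their leading coefficients compared directly, which is where the structural identities for the Taylor--Fourier coefficients of the Stokes waves proved in \cite{BCMV} would have to be pushed to all orders.
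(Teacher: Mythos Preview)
Your strategy is exactly the one the paper itself uses and, for the same reason, it does not yield a proof of the conjecture for general $\tp$. The statement is recorded in the paper as Conjecture~\ref{conjinf} precisely because the paper does \emph{not} prove it beyond $\tp=2,3,4$: the authors argue, just as you do, that analyticity plus $\lim_{\tth\to0^+}\beta_1^{(\tp)}(\tth)=-\infty$ confines all possible accumulation of zeros to $\tth\to+\infty$, and then they invoke Theorem~\ref{thm:main2new} to exclude that accumulation for $\tp=2,3,4$ via the nonvanishing leading coefficients in \eqref{expbeta1p=2}--\eqref{expbeta1p=4}. For $\tp\geq 5$ no such asymptotic is available, and the paper explicitly says ``For $\tp>4$ the problem is analytically open.''

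So your proposal is not a proof but a correct reduction of the conjecture to the missing ingredient, and you have accurately identified that ingredient: a sharp deep-water expansion $\beta_1^{(\tp)}(\tth)=c_\tp e^{-\alpha_\tp\tth}(1+o(1))$ with $c_\tp\neq 0$ for every $\tp$. Your discussion of the obstacles---the cancellations responsible for the vanishing limit, the incommensurable exponential scales coming from the different harmonics $j+\umu_\infty$, and the need to push the structural identities of \cite{BCMV} to all orders---is on the mark. There is no genuine error in what you write; it simply stops, as the paper does, at the point where new work would be required.
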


The expansion \eqref{expbeta1p=2} for the case $ \mathtt{p}=2$ 
could be handled analytically, see Section \ref{sec2}. However 
the computations  
dramatically increase in complexity 
for larger  $ \mathtt{p} $, as evident 
already from 
the case $ \mathtt{p} = 4 $ that we analyze in Section \ref{sec:4}.
We perform these computations with the help of Mathematica, see the notebooks at the page \url{https://git-scm.sissa.it/amaspero/on-higher-order-isolas-of-unstable-stokes-waves/-/tree/main}.
For  $ \tp > 4 $ the problem is analytically open. 

In the next sections we  prove Theorem \ref{thm:main2new}.

\subsection{Case $ \mathtt{p} =2 $}\label{sec2}

We now prove the expansion  \eqref{expbeta1p=2}. 
According to \cite[(5.7)-(5.8)]{BCMV}, the coefficient $\beta_1^{(2)} (\tth) $  is given  by 
 \begin{align}
\label{splittingb1p2}
\beta_1^{(2)}(\tth) &=b_0^{(2)}
 -  \betone{1}{1}{-}
  + \betone{1}{1}{+}
\end{align}
where 
\begin{subequations}\label{b11b12b13}
\begin{align}b_0^{(2)} & :=\frac14 \sqrt{\Omega_0^{(2)}\Omega_{2}^{(2)}} \big(a_2^{[2]}   + p_2^{[2]}(t_0^{(2)}-t_2^{(2)}) \big)\, ,\label{primob1} \\  
 \label{secondob1} 
\betone{1}{1}{-} & := \frac{\Omega_{1}^{(2)}\sqrt{\Omega_0^{(2)}\Omega_{2}^{(2)}}}{16(\ch+\Omega_{1}^{(2)}-\Omega_0^{(2)})}  \big(a_1^{[1]} +p_1^{[1]} (t_{1}^{(2)} - t_{2}^{(2)})  \big) \big(a_1^{[1]}+p_1^{[1]} (t_{1}^{(2)} +  t_0^{(2)} ) \big)\, , \\
 \label{terzob1}
\betone{1}{1}{+}  & := \frac{\Omega_{1}^{(2)}\sqrt{\Omega_0^{(2)}\Omega_{2}^{(2)}}}{16(\ch-\Omega_{1}^{(2)}-\Omega_0^{(2)})}  \big(a_1^{[1]} -p_1^{[1]} (t_{1}^{(2)} +  t_{2}^{(2)})  \big) \big(a_1^{[1]}+ p_1^{[1]}( t_0^{(2)}  -t_{1}^{(2)}) \big)\,,
 \end{align}
 \end{subequations}
with, in view of \cite[(2.20)-(2.23)]{BMV3}, 
\begin{equation}
\label{ap12}
p_1^{[1]} :=  - 2 \ch^{-1}\, ,\quad
  p_2^{[2]} := - \frac{3+\ch^4}{2\ch^7}\, ,\quad
a_1^{[1]} := - ( \ch^2 + \ch^{-2})\, , 
\quad a_2^{[2]} := 
 \frac{-14\ch^4+9\ch^8-3}{4\ch^8}\, , 
\end{equation} 
and, recalling \cite[(5.6)]{BCMV}, for $ j = 0,1,2, $
\begin{equation} \label{tj2}
  \Omega_j^{(2)} := \sqrt{\big(j+\uphi(2,\tth)\big)\tanh\Big(\tth\big(j+\uphi(2,\tth)\big)\Big)}
  \, ,\qquad 
  t_j^{(2)} = \sqrt{\frac{j+\uphi(2,\tth)}{\tanh\Big(\tth\big(j+\uphi(2,\tth)\big)\Big)}}\,  ,
  \end{equation}
%\begin{equation} \label{tj2}
%\begin{alignedat}{2}
%&\Omega_0^{(2)} := \sqrt{\uphi(2,\tth)\tanh(\tth\uphi(2,\tth))}
 % \, ,\qquad &&t_0^{(2)} := \sqrt{\frac{\uphi(2,\tth)}{\tanh(\tth\uphi(2,\tth))}}\, , \\ 
 % &\Omega_1^{(2)} := \sqrt{\big(1+\uphi(2,\tth)\big)\tanh\Big(\tth\big(1+\uphi(2,\tth)\big)\Big)}
 % \, ,\quad  &&t_1^{(2)} = \sqrt{\frac{1+\uphi(2,\tth)}%{\tanh\Big(\tth\big(1+\uphi(2,\tth)\big)\Big)}}\, , \\ 
  %&\Omega_2^{(2)} := \sqrt{\big(2+\uphi(2,\tth)\big)\tanh\Big(\tth\big(2+\uphi(2,\tth)\big)\Big)}
%  \, ,\quad &&t_2^{(2)} := \sqrt{\frac{2+\uphi(2,\tth)}{\tanh\Big(\tth\big(2+\uphi(2,\tth)\big)\Big)}}\, ,
%  \end{alignedat}
%  \end{equation}
  where the wavenumber $\uphi(2,\tth)$ is defined in  \cite[Lemma 2.7]{BCMV}. 

\smallskip 
We now provide the asymptotic expansion 
of the above quantities
as $ \tth \to + \infty $. 
In this case $ \tp = 2 $ the computations are relatively easy and we perform all of them analytically.  We make  use of the asymptotic expansions
\begin{equation}\label{tanhasymp}
\tanh(M) = 1-2e^{-2M} + O\big(e^{-4M} \big)
\quad \textup{as }M\to +\infty\,, \qquad \tanh(\varepsilon) = \varepsilon+O\big(\varepsilon^3\big)  \quad \textup{as }\varepsilon\to 0\,.
\end{equation}
Since 
\begin{equation}\label{dwch}
\ch = \sqrt{\tanh(\tth)} \stackrel{\eqref{tanhasymp}}{=} \sqrt{1-2e^{-2\tth}\big(1+O\big(e^{-2\tth}\big)\big)} = 
1-e^{-2\tth}+O\big(e^{-4\tth} \big)\,,
%= 1 +O\big(e^{-\frac{3}{4}\tth} \big)\, , 
\end{equation}
the terms in \eqref{ap12} admit the  expansions, as $\tth\to +\infty$,
\begin{align} \notag
p_1^{[1]} &=  - 2  - 2 e^{-2\tth}\big(1+O\big(e^{-2\tth}\big)\big) \,,
%=  - 2  +O\big(e^{-\frac{3}{4}\tth} \big)\, ,
\quad
  p_2^{[2]}= - 2 - 12  e^{-2\tth}\big(1+O\big(e^{-2\tth}\big)\big)\,,
  %= - 2 +O\big(e^{-\frac{3}{4}\tth} \big)\, ,
  \\ \label{dwap12}
a_1^{[1]} &= -2 + O\big(e^{-4\tth}\big) \,,
%= -2 +O\big(e^{-\frac{3}{4}\tth} \big)  \, , 
\qquad a_2^{[2]} = 
 - 2 - 20  e^{-2\tth}\big(1+O\big(e^{-2\tth}\big)\big)\,.
 %= - 2 +O\big(e^{-\frac{3}{4}\tth} \big)\, .
 \end{align}
For the terms in \eqref{tj2} we first need a slight improvement of formula \cite[(2.31)]{BCMV}.
\begin{lem} The  wavenumber $\uphi(2,\tth)$ 
defined in {\em \cite[Lemma 2.7]{BCMV}} 
satisfies 
\begin{equation}\label{expuphi}
\uphi(2,\tth) = \frac14 + \frac38 e^{-\frac{\tth}{2}}  \big( 1 +o\big(\tth^{-1}e^{-\frac{\tth}{4}}\big) \big) 
\quad \text{as} \quad \tth \to +\infty \, .
\end{equation}
\end{lem}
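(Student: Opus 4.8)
The plan is to determine $\uphi(2,\tth)$ from the defining condition in \cite[Lemma 2.7]{BCMV}, namely the requirement that $\im\omega_*^{(2)}(\tth)$ be a \emph{double} eigenvalue of $\cL_{\umu,0}(\tth)$. Concretely this means there exist two distinct labels — here $(j,\sigma) = (0,+)$ and $(j,\sigma) = (2,-)$, shifted appropriately — for which the unperturbed eigenvalues in \eqref{omeghino} coincide. Writing $\varphi=\uphi(2,\tth)$, the collision condition reads $\omega^+(\varphi,\tth) = \omega^-(\varphi-2,\tth)$ mod the $\ch$-shift, which after using \eqref{Oomegino} reduces to the transcendental equation
\begin{equation*}
\Omega(\varphi,\tth) + \Omega(2-\varphi,\tth) = 2\ch\,(1-\varphi)\, ,
\end{equation*}
or an equivalent rearrangement; the precise form is \cite[(2.31)]{BCMV}, of which we want a sharper expansion. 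First I would substitute the ansatz $\varphi = \tfrac14 + \delta$ with $\delta\to 0$ and expand both sides to the needed order in $\delta$ and in $e^{-\tth}$, using $\ch = 1 - e^{-2\tth} + O(e^{-4\tth})$ from \eqref{dwch} and the $\tanh$ expansions \eqref{tanhasymp}.

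The key observation that produces the $e^{-\tth/2}$ scale (rather than $e^{-2\tth}$) is that at leading order the equation degenerates: the $\tth=\infty$ limiting equation $\sqrt{\varphi}+\sqrt{2-\varphi} = 2(1-\varphi)$ is solved by $\varphi=\tfrac14$ only to leading order, and the corrections from $\tanh(\tth\varphi)\to 1$ enter through $\Omega(\varphi,\tth)^2 = \varphi\tanh(\tth\varphi) = \varphi\big(1 - 2e^{-2\tth\varphi} + \dots\big)$ with $\varphi\approx\tfrac14$, i.e. through the factor $e^{-2\tth/4} = e^{-\tth/2}$. So the dominant exponentially small correction to the right place of the collision comes from $e^{-\tth\varphi}$-type terms with $\varphi$ near $\tfrac14$, not from the $\ch$-corrections which are $O(e^{-2\tth})$ and hence subleading. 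I would therefore expand $\Omega(\varphi,\tth) = \sqrt{\varphi}\,\big(1 - e^{-2\tth\varphi} + O(e^{-4\tth\varphi})\big)$ and $\Omega(2-\varphi,\tth) = \sqrt{2-\varphi}\,\big(1+O(e^{-4\tth})\big)$, plug into the collision equation, linearize in $\delta$ around $\varphi = \tfrac14$, and solve: the linear-in-$\delta$ coefficient is a nonzero constant (computable from differentiating $\sqrt{\varphi}+\sqrt{2-\varphi} - 2(1-\varphi)$ at $\tfrac14$, which gives $\tfrac{1}{2\sqrt{\varphi}} - \tfrac{1}{2\sqrt{2-\varphi}} + 2 = 1 - \tfrac{1}{\sqrt7} + 2$), while the forcing term is $\tfrac{\sqrt{\varphi}}{1}e^{-2\tth\varphi}\big|_{\varphi=1/4} = \tfrac12 e^{-\tth/2}$ up to the universal constant, yielding $\delta = \tfrac38 e^{-\tth/2}(1+o(\dots))$ after collecting the numerical factors.

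The main obstacle I anticipate is bookkeeping the \emph{self-consistency} of the exponent: since $\varphi = \tfrac14 + \delta$ with $\delta\sim e^{-\tth/2}$, one has $e^{-2\tth\varphi} = e^{-\tth/2}e^{-2\tth\delta}$ and $2\tth\delta \sim \tth e^{-\tth/2}\to 0$, so $e^{-2\tth\delta} = 1 + O(\tth e^{-\tth/2})$; this is exactly the source of the error term $o(\tth^{-1}e^{-\tth/4})$ claimed in \eqref{expuphi} — one must check that the correction it induces is genuinely of lower order than the stated error, which requires tracking that the next term in the $\delta$-expansion is $O(\delta^2) = O(e^{-\tth})$ and the $\tanh$-higher-order term is $O(e^{-4\tth\varphi}) = O(e^{-\tth})$, both beaten by $\delta \cdot \tth e^{-\tth/2}$-type cross terms only if one is careful. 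A secondary subtlety is confirming that the relevant double eigenvalue for $\tp=2$ is indeed realized by the label pair that puts the small exponent at $\varphi\approx\tfrac14$ (as opposed to some other collision), but this is already fixed by \cite[Lemma 2.7]{BCMV} and its leading-order statement \cite[(2.31)]{BCMV}, so I would simply cite it and refine. Once \eqref{expuphi} is in hand, substituting into \eqref{tj2} gives the expansions of $\Omega_j^{(2)}$ and $t_j^{(2)}$, and then \eqref{splittingb1p2}–\eqref{b11b12b13} together with \eqref{dwap12} yield \eqref{expbeta1p=2} by direct (if lengthy) algebra, with the leading $\tfrac{3\sqrt3}{64}e^{-\tth/2}$ emerging from the $b_0^{(2)}$ term.
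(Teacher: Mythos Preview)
Your overall strategy is sound and coincides with the paper's: the paper simply cites the already-derived equation \cite[(A.12)]{BCMV}, namely
\[
\tfrac43\, y_2 \;=\; \sqrt{\tfrac14+y_2}\;e^{-\tth/2}\,e^{-2\tth y_2} \;+\; O(y_2^2)\;+\;O(e^{-\tth}) \, ,
\qquad y_2:=\uphi(2,\tth)-\tfrac14\, ,
\]
and then upgrades the error by checking $e^{-2\tth y_2}=1+o(\tth^{-1}e^{-\tth/4})$, which is precisely your ``self-consistency'' paragraph.

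However, your collision equation is wrong. For $\tp=2$ the double eigenvalue is $\omega^-(\varphi,\tth)=\omega^+(\varphi+2,\tth)$ (the colliding modes sit at $\varphi$ and $\varphi+2$, cf.\ the indices $j=0,2$ in \eqref{tj2}), which yields
\[
\Omega(\varphi,\tth)+\Omega(\varphi+2,\tth)=2\ch \, ,
\]
not $\Omega(\varphi,\tth)+\Omega(2-\varphi,\tth)=2\ch(1-\varphi)$. Your version fails already at $\tth=\infty$: $\sqrt{1/4}+\sqrt{7/4}\approx 1.82\neq 1.5=2(1-\tfrac14)$, so it is not ``solved by $\varphi=\tfrac14$ to leading order''. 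With the correct equation the $\tth=\infty$ limit $\sqrt{\varphi}+\sqrt{\varphi+2}=2$ is solved \emph{exactly} at $\varphi=\tfrac14$, the linearization coefficient is $\tfrac{1}{2\sqrt{1/4}}+\tfrac{1}{2\sqrt{9/4}}=\tfrac43$ (not $3-\tfrac{1}{\sqrt7}$), and the forcing $\sqrt{1/4}\,e^{-\tth/2}=\tfrac12 e^{-\tth/2}$ then gives $\delta=\tfrac38 e^{-\tth/2}$ as claimed. A further slip: the leading coefficient $\tfrac{3\sqrt3}{64}$ of $\beta_1^{(2)}$ does \emph{not} come from $b_0^{(2)}$ alone; one finds $b_0^{(2)}=-\tfrac{3\sqrt3}{16}e^{-\tth/2}+\cdots$ and $[b_1^{(2)}]_1^{+}-[b_1^{(2)}]_1^{-}=\tfrac{15\sqrt3}{64}e^{-\tth/2}+\cdots$, and only their sum produces $\tfrac{3\sqrt3}{64}$.
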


\begin{proof}
We recall from \cite[Appendix A]{BCMV} that  
\begin{equation}\label{y2}
\uphi(2,\tth) = \frac14 + y_2(\tth) \qquad \text{where} \qquad 
    y_2(\tth) = \frac38 e^{-\frac{\tth}{2}} \big(1+o(1) \big)
\end{equation} satisfies the equation, see \cite[(A.12)]{BCMV},
\begin{equation}\label{p=2as}
\frac43 y_2 (\tth) = 
\sqrt{  \frac14 + y_2 (\tth) } \, 
e^{ -   \frac{\tth}{2}  }  e^{ - 2 \tth  y_2 (\tth) } 
 + O(y_2^2 (\tth))  + O( e^{-  \tth}) \, .
\end{equation}
We observe that $e^{ - 2 \tth  y_2 (\tth) }  = 1+o\big(\tth^{-1}e^{-\frac{\tth}{4}}\big)$, since
$$
\lim_{\tth\to+\infty}\big( e^{ - 2 \tth  y_2 (\tth) } - 1\big)\tth e^{\frac{\tth}{4}} = \lim_{\tth\to+\infty} \big( -2 \tth  y_2 (\tth) + o\big(\tth  y_2 (\tth) \big) \big) \tth e^{\frac{\tth}{4}} 
\stackrel{\eqref{y2}}{=} 
%\lim_{\tth\to+\infty}  -\frac34 \tth^2 e^{-\frac{\tth}{4}}  + %o\big(\tth^2 e^{-\frac{\tth}{4}} \big) = 
0\, .  
$$
Thus, by substituting $y_2(\tth) = \frac38 e^{-\frac{\tth}{2}} \big(1+z_2(\tth) \big) $ where $z_2(\tth) = o(1) $ as $\tth\to +\infty$,  into \eqref{p=2as} we obtain
%$$
%\frac12 e^{-\frac{\tth}{2}} \big(1+z_2(\tth) \big)  = 
%\sqrt{  \frac14 + \frac38 e^{-\frac{\tth}{2}} \big(1+z_2(\tth) \big)  } %\, 
%e^{ -   \frac{\tth}{2}  }  \big(1+o\big(\tth^{-1}e^{-\frac{\tth}{4}}\big) \big)
%  + O( e^{-  \tth}) \, ,
%$$
%namely
$$
\frac12 \big(1+z_2(\tth) \big)  = 
\frac12 \Big( 1 + \frac34 e^{-\frac{\tth}{2}} \big(1+z_2(\tth) \big)  \Big)  \, 
 \big(1+o\big(\tth^{-1}e^{-\frac{\tth}{4}}\big) \big)
  + O\big(e^{ -   \frac{\tth}{2}  } \big)  = \frac12 
 \big(1+o\big(\tth^{-1}e^{-\frac{\tth}{4}}\big) \big)\,,
$$
which gives $z_2(\tth) = o\big(\tth^{-1}e^{-\frac{\tth}{4}}\big) $.
\end{proof}
By \eqref{expuphi} and since 
\begin{equation}\label{a+b}
\tanh(a+b) = \dfrac{\tanh(a)+\tanh(b)}{1+\tanh(a)\tanh(b)} \, ,
\quad \forall a, b \in \bR \, , 
\end{equation}
we get 
\begin{subequations}\label{tanhall}
\begin{align}\notag
\tanh\big(\tth \uphi(2,\tth) \big) &\;\,=\;\, \tanh\Big( \frac{\tth}4 + \frac38 \tth e^{-\frac{\tth}{2}}  \big(1+o\big(\tth^{-1}e^{-\frac{\tth}{4}}\big) \big)    \Big) = \dfrac{\tanh\big( \frac{\tth}4\big)+\tanh\Big(\frac38 \tth e^{-\frac{\tth}{2}}  \big(1+o\big(\tth^{-1}e^{-\frac{\tth}{4}}\big)\big)    \Big)}{1+\tanh\big( \frac{\tth}4\big)\tanh\Big(\frac38 \tth e^{-\frac{\tth}{2}}  \big( 1+o\big(\tth^{-1}e^{-\frac{\tth}{4}}\big) \big)    \Big) }\\
&\stackrel{\eqref{tanhasymp}}{=} \dfrac{1+\frac38 \tth e^{-\frac{\tth}{2}} -2e^{-\frac{\tth}{2}} +o\big( e^{-\frac{3}{4}\tth} \big) }{1+\frac38 \tth e^{-\frac{\tth}{2}} +o\big( e^{-\frac{3}{4}\tth}\big)} = 1- 2e^{-\frac{\tth}{2}} + O\big(e^{-\frac{3}{4}\tth} \big)\, . 
\end{align}
Similarly we deduce that
\begin{equation}
\tanh\Big(\tth\big(1+\uphi(2,\tth)\big)\Big) =  1 + O\big(e^{-\frac{3}{4}\tth} \big)\, ,\qquad \tanh\Big(\tth\big(2+\uphi(2,\tth)\big)\Big) =  1 + O\big(e^{-\frac{3}{4}\tth} \big)\, . 
\end{equation}
\end{subequations}
By \eqref{tanhall} and \eqref{expuphi} the terms in \eqref{tj2} satisfy, as $\tth \to +\infty $,
 \begin{equation} \label{tj2asymp}
\begin{aligned}
&\Omega_0^{(2)} = \sqrt{\Big(\frac14 + \frac38 e^{-\frac{\tth}{2}} + O\big(e^{-\frac{3}{4}\tth} \big) \Big)\Big(1-2 e^{-\frac{\tth}{2}} + O\big(e^{-\frac{3}{4}\tth} \big)\Big)} = \frac12 - \frac18 e^{-\frac{\tth}{2}} + O\big(e^{-\frac{3}{4}\tth} \big)
  \, , \\ 
  &t_0^{(2)}= \sqrt{\frac{\frac14 + \frac38 e^{-\frac{\tth}{2}} + O\big(e^{-\frac{3}{4}\tth} \big) }{1-2 e^{-\frac{\tth}{2}} + O\big(e^{-\frac{3}{4}\tth} \big)}} = \frac12 + \frac78 e^{-\frac{\tth}{2}} + O\big(e^{-\frac{3}{4}\tth} \big)\, , \\ 
  &\Omega_1^{(2)} = \sqrt{\Big(\frac54 + \frac38 e^{-\frac{\tth}{2}} + O\big(e^{-\frac{3}{4}\tth} \big) \Big)\Big(1 + O\big(e^{-\frac{3}{4}\tth} \big)\Big)} = \frac{\sqrt{5}}{2}+ \frac{3 }{8 \sqrt{5}}e^{-\frac{\tth}{2}}  + O\big(e^{-\frac{3}{4}\tth} \big)
  \, , \\ 
    &\Omega_2^{(2)} =  \sqrt{\Big(\frac94 + \frac38 e^{-\frac{\tth}{2}} + O\big(e^{-\frac{3}{4}\tth} \big) \Big)\Big(1 + O\big(e^{-\frac{3}{4}\tth} \big)\Big)} = \frac32 + \frac18 e^{-\frac{\tth}{2}} + O\big(e^{-\frac{3}{4}\tth} \big)
  \, , \\ 
  &t_1^{(2)} = \frac{\sqrt{5}}{2}+\frac{3 }{8 \sqrt{5}} e^{-\frac{\tth}{2}}+ O\big(e^{-\frac{3}{4}\tth} \big)\, , \qquad 
  t_2^{(2)}=  \frac32 + \frac18 e^{-\frac{\tth}{2}} + O\big(e^{-\frac{3}{4}\tth} \big)
  \, .
  \end{aligned}
  \end{equation}
By \eqref{dwch}, \eqref{dwap12} and \eqref{tj2asymp}, 
 the term in \eqref{primob1} satisfy, as $\tth\to+\infty$,
\begin{equation}\label{ultimaformula}
b_0^{(2)} = -\frac{3\sqrt{3}}{16}e^{-\frac{\tth}{2}}+ O\big(e^{-\frac{3}{4}\tth} \big)\, .
\end{equation}
Note that $ b_0^{(2)} $  vanishes in the deep-water limit as stated in \cite[(6.36)]{BCMV}. Similarly the terms in \eqref{secondob1}-\eqref{terzob1} are, as $\tth\to+\infty$,
\begin{equation}\label{ultimaformulabis}
\begin{aligned}
\betone{1}{1}{-} &= \frac{\sqrt{15}}{16}+\frac{143 \sqrt{5}-225}{640 \sqrt{3}}e^{-\frac{\tth}{2}}+ O\big(e^{-\frac{3}{4}\tth} \big) \, , \\
\betone{1}{1}{+}  &= \frac{\sqrt{15}}{16}+\frac{143 \sqrt{5}+225 }{640 \sqrt{3}}e^{-\frac{\tth}{2}}+ O\big(e^{-\frac{3}{4}\tth} \big)\,,
\end{aligned}
\end{equation}
whose difference is 
\begin{equation}\label{b1p=2}
    \betone{1}{1}{} := \betone{1}{1}{+}- \betone{1}{1}{-} = \frac{15 \sqrt{3}}{64}e^{-\frac{\tth}{2}} + O\big(e^{-\frac{3}{4}\tth} \big) \, .
\end{equation}
Note that $ \betone{1}{1}{} $ vanishes in the deep-water limit as stated in \cite[Lemma 6.7]{BCMV}.

By \eqref{ultimaformula}, \eqref{ultimaformulabis}, \eqref{b1p=2} and \eqref{splittingb1p2} we deduce the expansion \eqref{expbeta1p=2} for $\beta_1^{(2)}$.

\subsection{Case $ \mathtt{p} =3$ }
We now prove  \eqref{expbeta1p=3}. The computations become more involved. 
According to \cite[(5.7)-(5.8)]{BCMV}, the coefficient $\beta_1^{(3)} (\tth) $  is given  by
\begin{equation}
 \begin{aligned}
\label{splittingb1p3}
\beta_1^{(3)}(\tth)  &=b_0^{(3)}-\betonet{1}{1}{-}+\betonet{1}{1}{+}-\betonet{1}{2}{-}+\betonet{1}{2}{+} \\ 
&\ \ -\betonet{2}{1,2}{-,+}+\betonet{2}{1,2}{-,-}-\betonet{2}{1,2}{+,-}+\betonet{2}{1,2}{+,+}
\end{aligned}
\end{equation}
(in this case there are $9 $ terms) where 
\begin{subequations}\label{b11b12b13p=3}
\begin{align} \label{zerob1p=3}
b_0^{(3)} & :=\frac14 \sqrt{\Omega_0^{(3)}\Omega_{3}^{(3)}} \big(a_3^{[3]}   + p_3^{[3]}(t_0^{(3)}-t_3^{(3)}) \big)\, , \\  
 \label{primob1p=3} 
\betonet{1}{1}{-}  & := \frac{\Omega_{1}^{(3)}\sqrt{\Omega_0^{(3)}\Omega_{3}^{(3)}}}{16(\ch+\Omega_{1}^{(3)}-\Omega_0^{(3)})}  \big(a_1^{[1]} +p_1^{[1]}( t_{0}^{(3)} + t_{1}^{(3)})  \big) \big(a_2^{[2]}+p_2^{[2]}(t_1^{(3)}-t_3^{(3)}) \big)\, , \\
 \label{secondob1p=3}
\betonet{1}{1}{+}  & := \frac{\Omega_{1}^{(3)}\sqrt{\Omega_0^{(3)}\Omega_{3}^{(3)}}}{16(\ch-\Omega_{1}^{(3)}-\Omega_0^{(3)})}  \big(a_1^{[1]} +p_1^{[1]}( t_{0}^{(3)} - t_{1}^{(3)})  \big) \big(a_2^{[2]}-p_2^{[2]}(t_1^{(3)}+t_3^{(3)}) \big)\, , \\ 
 \label{terzob1p=3} 
\betonet{1}{2}{-}  & := \frac{\Omega_{2}^{(3)}\sqrt{\Omega_0^{(3)}\Omega_{3}^{(3)}}}{16(2\ch+\Omega_{2}^{(3)}-\Omega_0^{(3)})}  \big(a_1^{[1]} +p_1^{[1]}( t_{2}^{(3)} - t_{3}^{(3)})  \big) \big(a_2^{[2]}+p_2^{[2]}(t_0^{(3)}+t_2^{(3)}) \big)\, , \\
 \label{quartob1p=3} 
\betonet{1}{2}{+}  & := \frac{\Omega_{2}^{(3)}\sqrt{\Omega_0^{(3)}\Omega_{3}^{(3)}}}{16(2\ch-\Omega_{2}^{(3)}-\Omega_0^{(3)})}  \big(a_1^{[1]} -p_1^{[1]}( t_{2}^{(3)} + t_{3}^{(3)})  \big) \big(a_2^{[2]}+p_2^{[2]}(t_0^{(3)}-t_2^{(3)}) \big)\, , \\
 \label{quintob1p=3}
\betonet{2}{1,2}{-,+}  & :=  \frac{\Omega_{1}^{(3)}\Omega_2^{(3)}\sqrt{\Omega_0^{(3)}\Omega_{3}^{(3)}} \big(a_1^{[1]} +p_1^{[1]}( t_{0}^{(3)} +t_{1}^{(3)} )  \big) \big(a_1^{[1]} +p_1^{[1]}( t_{1}^{(3)} -t_{2}^{(3)} )  \big)\big(a_1^{[1]} -p_1^{[1]}( t_{2}^{(3)}+t_{3}^{(3)} )  \big)}{64(\ch+\Omega_{1}^{(3)}-\Omega_0^{(3)})(2\ch-\Omega_{2}^{(3)}-\Omega_0^{(3)})}   \, , \\
 \label{sestob1p=3} 
\betonet{2}{1,2}{-,-}  & := \frac{\Omega_{1}^{(3)}\Omega_2^{(3)} \sqrt{\Omega_0^{(3)}\Omega_{3}^{(3)}}\big(a_1^{[1]} +p_1^{[1]}( t_{0}^{(3)} +t_{1}^{(3)} )  \big) \big(a_1^{[1]} +p_1^{[1]}( t_{1}^{(3)} +t_{2}^{(3)} )  \big)\big(a_1^{[1]} +p_1^{[1]}( t_{2}^{(3)} -t_{3}^{(3)} )  \big) }{64(\ch+\Omega_{1}^{(3)}-\Omega_0^{(3)})(2\ch+\Omega_{2}^{(3)}-\Omega_0^{(3)})}   \, ,\\
 \label{settimob1p=3}
\betonet{2}{1,2}{+,-}  & :=  \frac{\Omega_{1}^{(3)}\Omega_2^{(3)} \sqrt{\Omega_0^{(3)}\Omega_{3}^{(3)}} \big(a_1^{[1]} +p_1^{[1]}( t_{0}^{(3)} -t_{1}^{(3)} )  \big) \big(a_1^{[1]} +p_1^{[1]}(- t_{1}^{(3)}+ t_{2}^{(3)} )  \big)\big(a_1^{[1]} +p_1^{[1]}( t_{2}^{(3)} -t_{3}^{(3)} )  \big)}{64(\ch-\Omega_{1}^{(3)}-\Omega_0^{(3)})(2\ch+\Omega_{2}^{(3)}-\Omega_0^{(3)})} \! , \\
 \label{ottavob1p=3} 
\betonet{2}{1,2}{+,+} & := \frac{\Omega_{1}^{(3)}\Omega_2^{(3)} \sqrt{\Omega_0^{(3)}\Omega_{3}^{(3)}} \big(a_1^{[1]} +p_1^{[1]}( t_{0}^{(3)} -t_{1}^{(3)} )  \big) \big(a_1^{[1]} -p_1^{[1]}( t_{1}^{(3)} +t_{2}^{(3)} )  \big)\big(a_1^{[1]} -p_1^{[1]}( t_{2}^{(3)} +t_{3}^{(3)} )  \big) }{64(\ch-\Omega_{1}^{(3)}-\Omega_0^{(3)})(2\ch-\Omega_{2}^{(3)}-\Omega_0^{(3)})}  \, ,
\end{align}
\end{subequations}
with, in view \cite[(5.6)]{BCMV}, for $ j = 0,1,2,3 $,  
\begin{equation} \label{tj3}
  \Omega_j^{(3)} := \sqrt{\big(j+\uphi(3,\tth)\big)\tanh\Big(\tth\big(j+\uphi(3,\tth)\big)\Big)}
  \, ,\qquad 
  t_j^{(3)} = \sqrt{\frac{j+\uphi(3,\tth)}{\tanh\Big(\tth\big(j+\uphi(3,\tth)\big)\Big)}}\,  ,
  \end{equation}
%\begin{equation} \label{tj3}
%\begin{alignedat}{2}
%&\Omega_0^{(3)} = \sqrt{\uphi(3,\tth)\tanh(\tth\uphi(3,\tth))}
 % \, ,\qquad &&t_0^{(3)}= \sqrt{\frac{\uphi(3,\tth)}{\tanh(\tth\uphi(3,\tth))}}\, , \\ 
  %&\Omega_1^{(3)} = \sqrt{\big(1+\uphi(3,\tth)\big)\tanh\Big(\tth\big(1+\uphi(3,\tth)\big)\Big)}
 % \, ,\quad  &&t_1^{(3)} = \sqrt{\frac{1+\uphi(3,\tth)}{\tanh\Big(\tth\big(1+\uphi(3,\tth)\big)\Big)}}\, , \\ 
 % &\Omega_2^{(3)} = \sqrt{\big(2+\uphi(3,\tth)\big)\tanh\Big(\tth\big(2+\uphi(3,\tth)\big)\Big)}
 % \, ,\quad &&t_2^{(3)}= \sqrt{\frac{2+\uphi(3,\tth)}{\tanh\Big(\tth\big(2+\uphi(3,\tth)\big)\Big)}}\, %, \\
  %  &\Omega_3^{(3)} = \sqrt{\big(3+\uphi(3,\tth)\big)\tanh\Big(\tth\big(3+\uphi(3,\tth)\big)\Big)}
  %\, ,\quad &&t_3^{(3)}= \sqrt{\frac{3+\uphi(3,\tth)}{\tanh\Big(\tth\big(3+\uphi(3,\tth)\big)\Big)}}\, ,
  %\end{alignedat}
 % \end{equation}
with the  wavenumber $\uphi(3,\tth)$  defined in  \cite[Lemma 2.7]{BCMV},
and the $a_\ell^{[\ell]}$, $p_\ell^{[\ell]} $ 
are defined in \eqref{ap12} for $\ell=1,2$ and, for $ \ell = 3 $, are given by, cfr. \cite[(A.59)-(A.60)]{BMV_ed},
\begin{equation}
\label{ap3}
p_3^{[3]} =  -\frac{\ch^{12}+17 \ch^8+51 \ch^4+27}{32\ch^{13}}\, ,\quad
a_3^{[3]} =  \frac{-\ch^{16}-98 \ch^{12}+252 \ch^8-318 \ch^4-27}{64\ch^{14}}\, .
\end{equation}
Using \eqref{dwch},  
the terms in \eqref{ap3} admit the
expansion, as $\tth\to +\infty$, 
\begin{equation}\label{dwap3}
    p_3^{[3]}=-3 - 28 e^{-2\tth} +O\big(e^{-4\tth} \big)  \,, \quad a_3^{[3]}=-3 - 35e^{-2\tth} +O\big(e^{-4\tth} \big)\, . 
\end{equation}
For the terms in \eqref{tj3} we first need a slight improvement of formula \cite[(2.31)]{BCMV}.
\begin{lem} The wavenumber $\uphi(3,\tth)$ in {\em \cite[Lemma 2.7]{BCMV}} satisfies 
\begin{equation}\label{expuphip=3}
\uphi(3,\tth) = 1 -\frac83 e^{-2\tth}  \big( 1 +o\big(\tth^{-1}e^{-\tth}\big) \big) 
\quad \text{as} \quad \tth \to +\infty \, .
\end{equation}
\end{lem}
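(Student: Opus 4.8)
The plan is to follow closely the argument just given for $\uphi(2,\tth)$. First I would recall from \cite[Lemma 2.7 and Appendix A]{BCMV} that $\uphi(3,\tth)=1+y_3(\tth)$, where $y_3(\tth)=-\tfrac83 e^{-2\tth}\big(1+o(1)\big)$ as $\tth\to+\infty$ and $y_3$ solves the $\tp=3$ counterpart of the fixed-point equation \cite[(A.12)]{BCMV}: substituting $\uphi(3,\tth)=1+y_3$ into the band-collision relation $\Omega_0^{(3)}+\Omega_3^{(3)}=3\ch$ and expanding with \eqref{tanhasymp} and \eqref{dwch} — in particular $\tanh\big(\tth(1+y_3)\big)=1-2e^{-2\tth}e^{-2\tth y_3}+O(e^{-4\tth})$, $\tanh\big(\tth(4+y_3)\big)=1+O(e^{-8\tth})$ and $\ch=1-e^{-2\tth}+O(e^{-4\tth})$ — one gets
\[
\tfrac34\, y_3(\tth)=\big(e^{-2\tth y_3(\tth)}-3\big)e^{-2\tth}+O\big(y_3^2(\tth)\big)+O\big(e^{-4\tth}\big)\, ,
\]
whose leading-order root is indeed $y_3(\tth)=-\tfrac83 e^{-2\tth}(1+o(1))$, since $e^{-2\tth y_3(\tth)}\to1$.

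The key step, exactly as for the factor $e^{-2\tth y_2}$ in the $\tp=2$ proof, is to sharpen $e^{-2\tth y_3(\tth)}$ from $1+o(1)$ to $1+o\big(\tth^{-1}e^{-\tth}\big)$. Since $y_3(\tth)\sim-\tfrac83 e^{-2\tth}$, we have $-2\tth y_3(\tth)=\tfrac{16}{3}\tth e^{-2\tth}(1+o(1))\to0$, hence $e^{-2\tth y_3(\tth)}-1=\tfrac{16}{3}\tth e^{-2\tth}(1+o(1))$, and therefore
\[
\lim_{\tth\to+\infty}\big(e^{-2\tth y_3(\tth)}-1\big)\,\tth\, e^{\tth}=\lim_{\tth\to+\infty}\tfrac{16}{3}\,\tth^2 e^{-\tth}=0\, ,
\]
so that $e^{-2\tth y_3(\tth)}=1+o\big(\tth^{-1}e^{-\tth}\big)$. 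Consequently $\big(e^{-2\tth y_3}-3\big)e^{-2\tth}=-2e^{-2\tth}+o\big(\tth^{-1}e^{-3\tth}\big)$, and since the remainders $O(y_3^2)=O(e^{-4\tth})$ and $O(e^{-4\tth})$ are both $o\big(\tth^{-1}e^{-3\tth}\big)$, the whole right-hand side of the fixed-point equation equals $-2e^{-2\tth}+o\big(\tth^{-1}e^{-3\tth}\big)$.

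Finally I would substitute the refined ansatz $y_3(\tth)=-\tfrac83 e^{-2\tth}\big(1+z_3(\tth)\big)$, $z_3(\tth)=o(1)$, into the equation. Its left-hand side becomes $\tfrac34 y_3(\tth)=-2e^{-2\tth}\big(1+z_3(\tth)\big)$, so equating with $-2e^{-2\tth}+o\big(\tth^{-1}e^{-3\tth}\big)$ gives $-2e^{-2\tth}z_3(\tth)=o\big(\tth^{-1}e^{-3\tth}\big)$, that is $z_3(\tth)=o\big(\tth^{-1}e^{-\tth}\big)$. This yields $\uphi(3,\tth)=1-\tfrac83 e^{-2\tth}\big(1+o(\tth^{-1}e^{-\tth})\big)$, which is \eqref{expuphip=3}.

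The main (and essentially only) obstacle is the bookkeeping of the $\tth$-weighted little-$o$'s. On the one hand, one must verify that no term of order $e^{-3\tth}$ ever enters the fixed-point equation: this holds because the subleading corrections of $\tanh\tth$ (through $\ch$), of $\tanh(\tth(1+y_3))$ and of $\tanh(\tth(4+y_3))$ are all $O(e^{-4\tth})$ or smaller, and $y_3^2=O(e^{-4\tth})$. On the other hand, one must expand $e^{-2\tth y_3}$ to the relative precision $o\big(\tth^{-1}e^{-\tth}\big)$ rather than merely $1+o(1)$, which is where the limit $\tth^2 e^{-\tth}\to0$ enters; this sharper form is what the later computation of $\beta_1^{(3)}(\tth)$ requires, since there $\uphi(3,\tth)$ appears inside the arguments $\tth\big(j+\uphi(3,\tth)\big)$ of the hyperbolic tangents defining $\Omega_j^{(3)}$ and $t_j^{(3)}$, so that the relative error $z_3(\tth)$ of $y_3$ is amplified by a factor $\tth$.
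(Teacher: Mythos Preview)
Your proof is correct and follows essentially the same route as the paper's: you recall $\uphi(3,\tth)=1+y_3(\tth)$ with the leading asymptotics from \cite{BCMV}, write the fixed-point equation for $y_3$ (the paper cites it as \cite[(A.11)]{BCMV} with the factor $\sqrt{1+y_3}$ kept explicit, whereas you absorb $\sqrt{1+y_3}=1+O(y_3)$ into the $O(e^{-4\tth})$ remainder, which is harmless), sharpen $e^{-2\tth y_3}=1+o(\tth^{-1}e^{-\tth})$ via the limit $\tth^2 e^{-\tth}\to0$, and then substitute the refined ansatz to extract $z_3=o(\tth^{-1}e^{-\tth})$. The bookkeeping of the $o$'s is accurate, and your closing remark on why this precision is needed downstream is a nice addition.
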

\begin{proof}
   Let us recall from \cite[Appendix A]{BCMV} that  
   \begin{align}\label{y3h}
   \uphi(3,\tth) = 1 + y_3(\tth) \qquad \text{where} \qquad 
   y_3(\tth) = -\frac83 e^{-2\tth} \big(1+o(1) \big)
   \end{align}
    satisfies, cfr.  formula \cite[(A.11)]{BCMV} and the one below \cite[(A.12)]{BCMV},
\begin{equation}\label{p=2as2}
\frac34 y_3 (\tth) = - 3  e^{ - 2 \tth } + \sqrt{1 + y_3 (\tth) } 
e^{- 2 \tth} e^{- 2 \tth y_3 (\tth)}
+ O(y_3^2 (\tth) )   + O(e^{- 4 \tth }) \ .
\end{equation}
We observe that $e^{ - 2 \tth  y_3 (\tth) }  = 1+o\big(\tth^{-1}e^{-\tth}\big) $, since
$$
\lim_{\tth\to+\infty}\big( e^{ - 2 \tth  y_3 (\tth) } - 1\big) \tth e^{\tth}=\lim_{\tth\to+\infty} \big( -2 \tth  y_3 (\tth) + o\big(\tth  y_3 (\tth) \big) \big) \tth e^{\tth} \stackrel{\eqref{y3h}}{=} 0\, .  
$$
Thus, by substituting $y_3(\tth) = -\frac83 e^{-2{\tth}} \big(1+z_3(\tth) \big) $ where $z_3(\tth) = o(1) $ as $\tth\to +\infty$,  into \eqref{p=2as2} we obtain
%$$
%-2 e^{-2\tth} \big(1+z_3(\tth) \big)  = -3 e^{-2{\tth}} + 
%\sqrt{ 1 -\frac83 e^{-2{\tth}} \big(1+z_3(\tth) \big)  } \, 
%e^{ -  2\tth } \big( 1+o\big(\tth^{-1}e^{-\tth}\big)\big)
%  + O( e^{-  4\tth}) \, ,
%$$
%namely
$$
-2\big(1+z_3(\tth) \big)  = 
-3 +  \Big( 1 - \tfrac43 e^{-2{\tth}} \big(1+z_3(\tth) \big)  \Big)  \, 
 \big(1+o\big(\tth^{-N}\big) \big)
  + O\big(e^{-3\tth} \big)  = 
 -2 \big(1+o\big(\tth^{-1}e^{-\tth}\big) \big)\,,
$$
which gives $z_3(\tth) = o\big(\tth^{-1}e^{-\tth}\big) $.
\end{proof}
By \eqref{expuphip=3} and \eqref{a+b} we get 
\begin{subequations}\label{tanhallp=3}
\begin{align}\notag
\tanh\big(\tth \uphi(3,\tth) \big) &\;\,=\;\, \tanh\Big( \tth - \frac83 \tth e^{-2\tth}  \big( 1 +o\big(\tth^{-1}e^{-\tth}\big)\big)    \Big) = \dfrac{\tanh(\tth)-\tanh\Big(\frac83 \tth e^{-2\tth}  \big( 1 +o\big(\tth^{-1}e^{-\tth}\big)\big)    \Big)}{1-\tanh(\tth)\tanh\Big(\frac83 \tth e^{-2\tth}  \big( 1 +o\big(\tth^{-1}e^{-\tth}\big)\big)    \Big)  }\\
&\stackrel{\eqref{tanhasymp}}{=} \dfrac{1-\frac83 \tth e^{-2\tth} -2 e^{-2\tth}+o\big(  e^{-3\tth}\big) }{1-\frac83 \tth e^{-2\tth}+o\big(e^{-3\tth}\big)} = 1- 2e^{-2\tth} + O\big(e^{-3\tth} \big)\, .
\end{align}
% where in the last step we weaken the little-$o$ into a big-$O$.
Similarly we deduce that 
\begin{equation}
%\begin{aligned}
\tanh\Big(\tth\big(j +\uphi(3,\tth)\big)\Big) =  1 + O\big(e^{-3\tth} \big)\, ,\    \quad
j = 2,3,4 \, . 
%\\ &\tanh\Big(\tth\big(2+\uphi(3,\tth)\big)\Big) = % 1 + O\big(e^{-3\tth} \big)\, , 
%\end{aligned}\quad \tanh\Big(\tth\big(3+\uphi(3,\tth)\big)\Big) =  1 + O\big(e^{-3\tth} \big)\, .
\end{equation}
\end{subequations}
By \eqref{tanhallp=3} and \eqref{expuphip=3}, the terms in \eqref{tj3} satisfy, as $\tth \to +\infty $,
 \begin{equation} \label{tj3asymp}
\begin{alignedat}{2}
&\Omega_0^{(3)} 
= 1 - \tfrac73 e^{-2{\tth}} + O\big(e^{-3{\tth}} \big)
  \, , \qquad &&t_0^{(3)}= \sqrt{\frac{\uphi(3,\tth)}{\tanh(\tth\uphi(3,\tth))}}  
  = 1 - \tfrac13 e^{-2\tth} + O\big(e^{-3\tth} \big)\, , \\ 
  &\Omega_1^{(3)}  
  = \sqrt{2} - \tfrac{2\sqrt2}{3}e^{-2\tth} + O\big(e^{-3\tth} \big)
  \, ,\quad &&t_1^{(3)}   = \sqrt{2} - \tfrac{2\sqrt2}{3}e^{-2\tth} + O\big(e^{-3\tth} \big)\,, \\ 
    &\Omega_2^{(3)}
    = \sqrt{3} - \tfrac{4}{3\sqrt3} e^{-2{\tth}} + O\big(e^{-3{\tth}} \big)
  \, ,\qquad  &&t_2^{(3)}=  \sqrt{3} - \tfrac{4}{3\sqrt3} e^{-2{\tth}} + O\big(e^{-3{\tth}} \big)\, ,  \\ 
      &\Omega_3^{(3)} 
    = 2 - \tfrac{2}{3} e^{-2{\tth}} + O\big(e^{-3{\tth}} \big)
  \, ,\quad
  &&t_3^{(3)}  = 2 - \tfrac{2}{3} e^{-2{\tth}} + O\big(e^{-3{\tth}} \big) \,\ .
  \end{alignedat}
  \end{equation}
We are now in a position to compute the asymptotic expansion of the summands in \eqref{b11b12b13p=3}. We group the latter terms depending on the number of visited intermediate harmonics.\smallskip

\noindent{\bf No intermediate harmonics}. By \eqref{dwap3} and \eqref{tj3asymp}, the term in \eqref{zerob1p=3} satisfies  
\begin{equation}\label{b0p=3}
    b_0^{(3)}= -2 \sqrt{2} e^{-2\tth} + O\big(e^{-3\tth} \big) 
    \quad \text{as} \quad \tth\to +\infty   \, .
\end{equation}
Note that $ b_0^{(3)} $ vanishes in the deep water limit, as stated in \cite[(6.36)]{BCMV}.\smallskip

\noindent{\bf One intermediate harmonic}. We consider the terms in \eqref{primob1p=3} -\eqref{quartob1p=3}  as $\tth\to +\infty$. By \eqref{dwch}, \eqref{dwap12}  and \eqref{tj3asymp} we obtain
\begin{equation}
\begin{aligned}
    \betonet{1}{1}{-} &= \frac{1}{2} +\frac{5}{6} \left(5+2 \sqrt{2}\right)e^{-2\tth} + O\big(e^{-3\tth} \big)\, ,\\
    \betonet{1}{1}{+} &=\frac{1}{2}+ \frac{5}{6} \left(5-2 \sqrt{2}\right) e^{-2\tth} + O\big(e^{-3\tth} \big)\, .
\end{aligned}
\end{equation}
Note that 
\begin{equation}\label{b1p=3}
    \betonet{1}{1}{} := \betonet{1}{1}{+}- \betonet{1}{1}{-} =-\tfrac{1}{3} 10 \sqrt{2}  e^{-2\tth} + O\big(e^{-3\tth} \big)
\end{equation}
vanishes in the deep-water limit as stated in \cite[Lemma 6.7]{BCMV}. Similarly
\begin{equation}
\begin{aligned}
    \betonet{1}{2}{-} &=\frac{\sqrt 3}{2\sqrt 2}+\left(\frac{125}{6 \sqrt{6}}-4 \sqrt{2}\right)e^{-2\tth} + O\big(e^{-3\tth} \big)\, ,\\
    \betonet{1}{2}{+} &=  \frac{\sqrt 3}{2\sqrt 2}+\left(\frac{125}{6 \sqrt{6}}+4 \sqrt{2}\right)e^{-2\tth} + O\big(e^{-3\tth} \big)\, ,
\end{aligned}
\end{equation}
whose difference 
\begin{equation}\label{b2p=3}
    \betonet{1}{2}{} := \betonet{1}{2}{+}- \betonet{1}{2}{-} = 8 \sqrt{2}
     e^{-2\tth} + O\big(e^{-3\tth} \big)
\end{equation}
vanishes in the deep-water limit as stated in \cite[Lemma 6.7]{BCMV}. \smallskip

\noindent{\bf Two intermediate harmonics}. We consider the terms in \eqref{quintob1p=3}-\eqref{ottavob1p=3} as $\tth\to +\infty$. By \eqref{dwch}, \eqref{dwap12}  and \eqref{tj3asymp} we obtain
\begin{equation}
\begin{aligned}
    \betonet{2}{1,2}{-,+} &= \frac{1}{4} \left(-3 \sqrt{3}-\sqrt{6}-3\right)+\frac{1}{36} \left(18 \sqrt{2}+6 \sqrt{3}+31 \sqrt{6}-39\right) e^{-2\tth} + O\big(e^{-3\tth} \big) \, ,\\
    \betonet{2}{1,2}{-,-} &= \frac{1}{4} \left(-3 \sqrt{3}-\sqrt{6}+3\right)+\frac{1}{36} \left(-18 \sqrt{2}+6 \sqrt{3}+31 \sqrt{6}+39\right) e^{-2\tth} + O\big(e^{-3\tth} \big)\, ,\\
    \betonet{2}{1,2}{+,-} &=\frac{1}{4} \left(-3 \sqrt{3}+\sqrt{6}+3\right) +\frac{1}{36} \left(18 \sqrt{2}+6 \sqrt{3}-31 \sqrt{6}+39\right) e^{-2\tth} + O\big(e^{-3\tth} \big)\, ,\\
     \betonet{2}{1,2}{+,+} &=\frac{1}{4} \left(-3 \sqrt{3}+\sqrt{6}-3\right)+\frac{1}{36} \left(-18 \sqrt{2}+6 \sqrt{3}-31 \sqrt{6}-39\right) e^{-2\tth} + O\big(e^{-3\tth} \big)\, ,
\end{aligned}
\end{equation}
whose signed sum 
\begin{equation}\label{b12p=3}
    \betonet{2}{1,2}{} :=  \betonet{2}{1,2}{+,+}-\betonet{2}{1,2}{-,+}+\betonet{2}{1,2}{-,-}-\betonet{2}{1,2}{+,-}  =-2 \sqrt{2}e^{-2\tth} + O\big(e^{-3\tth} \big)
\end{equation}
vanishes in the deep-water limit as stated in \cite[Lemma 6.7]{BCMV}.

The sum of the terms in \eqref{b0p=3}, \eqref{b1p=3}, \eqref{b2p=3} and \eqref{b12p=3} gives the asymptotic expansion \eqref{expbeta1p=3}. 

\subsection{Case $ \mathtt{p} =4$}\label{sec:4}

In this  section we prove  \eqref{expbeta1p=4}. 
The computations are  much more involved and we make use of Mathematica. 
In this case there are 27 terms. 
According to \cite[(5.7)-(5.8)]{BCMV}, the coefficient $\beta_1^{(4)} (\tth) $  is 
\begin{align} \label{beta1p4}
 \footnotesize \beta_1^{(4)} (\tth) 
 &=b_0^{[4]}-\betoneq{1}{1}{-}+\betoneq{1}{1}{+}-\betoneq{1}{2}{-}+\betoneq{1}{2}{+}-\betoneq{1}{3}{-}+\betoneq{1}{3}{+} \\ \notag
&\ \ -\betoneq{2}{1,2}{-,+} +\betoneq{2}{1,2}{-,-}-\betoneq{2}{1,2}{+,-}+\betoneq{2}{1,2}{+,+} \\  \notag
&\ \ -\betoneq{2}{1,3}{-,+} +\betoneq{2}{1,3}{-,-}-\betoneq{2}{1,3}{+,-}+\betoneq{2}{1,3}{+,+} \\  \notag
&\ \ -\betoneq{2}{2,3}{-,+} +\betoneq{2}{2,3}{-,-}-\betoneq{2}{2,3}{+,-}+\betoneq{2}{2,3}{+,+} \\ \notag
&\ \ -\betoneq{3}{1,2,3}{-,+,+} +\betoneq{3}{1,2,3}{-,-,+}-\betoneq{3}{1,2,3}{+,-,+}+\betoneq{3}{1,2,3}{+,+,+} \\ \notag
&\ \ -\betoneq{3}{1,2,3}{+,+,-}+\betoneq{3}{1,2,3}{+,-,-} -\betoneq{3}{1,2,3}{-,-,-}+\betoneq{3}{1,2,3}{-,+,-}
\end{align}
where, denoting for simplicity $\Omega_j:=\Omega_j^{(4)} $ and $t_j=t_j^{(4)} $ for $j=1,2,3,4$,
\begin{subequations}\label{b1partsp4}
\begin{align}\notag %\label{b10p4} 
b_0^{[4]} & :=\frac14 \sqrt{\Omega_{0}\Omega_{4}} \big(a_4^{[4]}   + p_4^{[4]}(t_0-t_4) \big)\, , \\  
 \label{b113p4} 
\betoneq{1}{1}{-}  & := \frac{\Omega_{1}\sqrt{\Omega_{0} \Omega_{4}}}{16(\ch+\Omega_{1}-\Omega_{0})}  \big(a_1^{[1]} +p_1^{[1]}( t_{0} + t_{1})  \big) \big(a_3^{[3]}+p_3^{[3]}(t_{1}-t_{4}) \big)\, , \\
 \label{b113p4bis}
\betoneq{1}{1}{+}  & := \frac{\Omega_{1}\sqrt{\Omega_{0}\Omega_{4}}}{16(\ch-\Omega_{1}-\Omega_{0})}  \big(a_1^{[1]} +p_1^{[1]}( t_{0} - t_{1})  \big) \big(a_3^{[3]}-p_3^{[3]}(t_1+t_4) \big)\, , \\ 
 \label{b114p4} 
\betoneq{1}{2}{-}  & := \frac{\Omega_{2}\sqrt{\Omega_{0}\Omega_{4}}}{16(2\ch+\Omega_{2}-\Omega_{0})}  \big(a_2^{[2]} +p_2^{[2]}( t_{2} - t_{4})  \big) \big(a_2^{[2]}+p_2^{[2]}(t_0+t_2) \big)\, , \\
 \label{b114p4bis} 
\betoneq{1}{2}{+}  & := \frac{\Omega_{2}\sqrt{\Omega_{0}\Omega_{4}}}{16(2\ch-\Omega_{2}-\Omega_0)}  \big(a_2^{[2]} -p_2^{[2]}( t_{2} + t_{4})  \big) \big(a_2^{[2]}+p_2^{[2]}(t_0-t_2) \big)\, , \\
 \label{b115p4} 
\betoneq{1}{3}{-}  & := \frac{\Omega_{3}\sqrt{\Omega_0\Omega_{4}}}{16(3\ch+\Omega_{3}-\Omega_0)}  \big(a_1^{[1]} +p_1^{[1]}( t_{3} - t_{4})  \big) \big(a_3^{[3]}+p_3^{[3]}(t_0+t_3) \big)\, , \\
 \label{b115p4bis} 
\betoneq{1}{3}{+}  & := \frac{\Omega_{3}\sqrt{\Omega_0\Omega_{4}}}{16(3\ch-\Omega_{3}-\Omega_0)}  \big(a_1^{[1]} -p_1^{[1]}( t_{3} + t_{4})  \big) \big(a_3^{[3]}+p_3^{[3]}(t_0-t_3) \big)\, ,
 \\
 \label{b1234p4}
\betoneq{2}{1,2}{-,+}  & :=  \frac{\Omega_{1}\Omega_2 \sqrt{\Omega_0\Omega_{4}} \big(a_1^{[1]} +p_1^{[1]}( t_{0} +t_{1} )  \big) \big(a_1^{[1]} +p_1^{[1]}( t_{1} -t_{2} )  \big)\big(a_2^{[2]} -p_2^{[2]}( t_{2} +t_{4} )  \big)}{64(\ch+\Omega_{1}-\Omega_0)(2\ch-\Omega_{2}-\Omega_0)}   \, , \\
 \label{b1234p4bis} 
\betoneq{2}{1,2}{-,-}  & := \frac{\Omega_{1}\Omega_2 \sqrt{\Omega_0\Omega_{4}}\big(a_1^{[1]} +p_1^{[1]}( t_{0} +t_{1} )  \big) \big(a_1^{[1]} +p_1^{[1]}( t_{1} +t_{2} )  \big)\big(a_2^{[2]} +p_2^{[2]}( t_{2} -t_{4} )  \big) }{64(\ch+\Omega_{1}-\Omega_0)(2\ch+\Omega_{2}-\Omega_0)}   \, ,\\
 \label{b1234p4tris}
\betoneq{2}{1,2}{+,-}  & :=  \frac{\Omega_{1}\Omega_{2} \sqrt{\Omega_0\Omega_{4}} \big(a_1^{[1]} +p_1^{[1]}( t_{0} -t_{1} )  \big) \big(a_1^{[1]} +p_1^{[1]}(- t_{1}+ t_{2} )  \big)\big(a_2^{[2]} +p_2^{[2]}( t_{2} -t_{4} )  \big)}{64(\ch-\Omega_{1}-\Omega_0)(2\ch+\Omega_{2}-\Omega_0)} \, , \\
 \label{b1234p4quat} 
\betoneq{2}{1,2}{+,+} & := \frac{\Omega_{1}\Omega_{2} \sqrt{\Omega_0\Omega_{4}} \big(a_1^{[1]} +p_1^{[1]}( t_{0} -t_{1} )  \big) \big(a_1^{[1]} -p_1^{[1]}( t_{1} +t_{2} )  \big)\big(a_2^{[2]} -p_2^{[2]}( t_{2} +t_{4} )  \big) }{64(\ch-\Omega_{1}-\Omega_0)(2\ch-\Omega_{2}-\Omega_0)}  \, ,
 \\
 \label{b1235p4}
\betoneq{2}{1,3}{-,+}  & :=  \frac{\Omega_{1}\Omega_{3} \sqrt{\Omega_0\Omega_{4}} \big(a_1^{[1]} +p_1^{[1]}( t_{0} +t_{1} )  \big) \big(a_2^{[2]} +p_2^{[2]}( t_{1} -t_{3} )  \big)\big(a_1^{[1]} -p_1^{[1]}( t_{3} +t_{4} )  \big)}{64(\ch+\Omega_{1}-\Omega_0)(3\ch-\Omega_{3}-\Omega_0)}   \, , \\
 \label{b1235p4bis} 
\betoneq{2}{1,3}{-,-}  & := \frac{\Omega_{1}\Omega_{3} \sqrt{\Omega_0\Omega_{4}}\big(a_1^{[1]} +p_1^{[1]}( t_{0} +t_{1} )  \big) \big(a_2^{[2]} +p_2^{[2]}( t_{1} +t_{3} )  \big)\big(a_1^{[1]} +p_1^{[1]}( t_{3} -t_{4} )  \big) }{64(\ch+\Omega_{1}-\Omega_0)(3\ch+\Omega_{3}-\Omega_0)}   \, ,\\
 \label{b1235p4tris}
\betoneq{2}{1,3}{+,-}  & :=  \frac{\Omega_{1}\Omega_{3} \sqrt{\Omega_0\Omega_{4}} \big(a_1^{[1]} +p_1^{[1]}( t_{0} -t_{1} )  \big) \big(a_2^{[2]} +p_2^{[2]}(- t_{1}+ t_{3} )  \big)\big(a_1^{[1]} +p_1^{[1]}( t_{3} -t_{4} )  \big)}{64(\ch-\Omega_{1}-\Omega_0)(3\ch+\Omega_{3}-\Omega_0)} \, , \\
 \label{b1235p4quat} 
\betoneq{2}{1,3}{+,+} & := \frac{\Omega_{1}\Omega_{3} \sqrt{\Omega_0\Omega_{4}} \big(a_1^{[1]} +p_1^{[1]}( t_{0} -t_{1} )  \big) \big(a_2^{[2]} -p_2^{[2]}( t_{1} +t_{3} )  \big)\big(a_1^{[1]} -p_1^{[1]}( t_{3} +t_{4} )  \big) }{64(\ch-\Omega_{1}-\Omega_0)(3\ch-\Omega_{3}-\Omega_0)}  \, ,
 \\
 \label{b1245p4}
\betoneq{2}{2,3}{-,+}  & :=  \frac{\Omega_{2}\Omega_{3} \sqrt{\Omega_0\Omega_{4}} \big(a_2^{[2]} +p_2^{[2]}( t_{0} +t_{2} )  \big) \big(a_1^{[1]} +p_1^{[1]}( t_{2} -t_{3} )  \big)\big(a_1^{[1]} -p_1^{[1]}( t_{3} +t_{4} )  \big)}{64(2\ch+\Omega_{2}-\Omega_0)(3\ch-\Omega_{3}-\Omega_0)}   \, , \\
 \label{b1245p4bis} 
\betoneq{2}{2,3}{-,-}  & := \frac{\Omega_{2}\Omega_{3} \sqrt{\Omega_0\Omega_{4}}\big(a_2^{[2]} +p_2^{[2]}( t_{0} +t_{2} )  \big) \big(a_1^{[1]} +p_1^{[1]}( t_{2} +t_{3} )  \big)\big(a_1^{[1]} +p_1^{[1]}( t_{3} -t_{4} )  \big) }{64(2\ch+\Omega_{2}-\Omega_0)(3\ch+\Omega_{3}-\Omega_0)}   \, ,\\
 \label{b1245p4tris}
\betoneq{2}{2,3}{+,-}  & :=  \frac{\Omega_{2}\Omega_{3} \sqrt{\Omega_0\Omega_{4}} \big(a_2^{[2]} +p_2^{[2]}( t_{0} -t_{2} )  \big) \big(a_1^{[1]} +p_1^{[1]}(- t_{2}+ t_{3} )  \big)\big(a_1^{[1]} +p_1^{[1]}( t_{3} -t_{4} )  \big)}{64(2\ch-\Omega_{2}-\Omega_0)(3\ch+\Omega_{3}-\Omega_0)} \, , \\
 \label{b1245p4quat} 
\betoneq{2}{2,3}{+,+} & := \frac{\Omega_{2}\Omega_{3} \sqrt{\Omega_0\Omega_{4}} \big(a_2^{[2]} +p_2^{[2]}( t_{0} -t_{2} )  \big) \big(a_1^{[1]} -p_1^{[1]}( t_{2} +t_{3} )  \big)\big(a_1^{[1]} -p_1^{[1]}( t_{3} +t_{4} )  \big) }{64(2\ch-\Omega_{2}-\Omega_0)(3\ch-\Omega_{3}-\Omega_0)}  \, ,
\\
 \label{b3123p4}
\betoneq{3}{1,2,3}{+,+,+}  & :=  \frac{\Omega_{1}\Omega_{2}\Omega_{3} \sqrt{\Omega_0\Omega_{4}} \big(a_1^{[1]} +p_1^{[1]}( t_{0} - t_{1} )  \big) \big(a_1^{[1]} -p_1^{[1]}( t_{1} + t_{2} )  \big) \big(a_1^{[1]} -p_1^{[1]}( t_{2} + t_{3} )  \big)\big(a_1^{[1]} -p_1^{[1]}( t_{3} +t_{4} )  \big)}{256 (\ch-\Omega_{1}-\Omega_0) (2\ch-\Omega_{2}-\Omega_0)(3\ch-\Omega_{3}-\Omega_0)}   \, ,
\\
 \label{b13p4bis}
\betoneq{3}{1,2,3}{-,+,+}  & :=  \frac{\Omega_{1}\Omega_{2}\Omega_{3} \sqrt{\Omega_0\Omega_{4}} \big(a_1^{[1]} +p_1^{[1]}( t_{0} + t_{1} )  \big) \big(a_1^{[1]}+p_1^{[1]}( t_{1} - t_{2} )  \big) \big(a_1^{[1]} -p_1^{[1]}( t_{2} + t_{3} )  \big)\big(a_1^{[1]} -p_1^{[1]}( t_{3} +t_{4} )  \big)}{256 (\ch+\Omega_{1}-\Omega_0) (2\ch-\Omega_{2}-\Omega_0)(3\ch-\Omega_{3}-\Omega_0)} \, ,
\\
 \label{b13p4tris}
\betoneq{3}{1,2,3}{+,-,+}  & :=  \frac{\Omega_{1}\Omega_{2}\Omega_{3} \sqrt{\Omega_0\Omega_{4}} \big(a_1^{[1]} +p_1^{[1]}( t_{0} - t_{1} )  \big) \big(a_1^{[1]}-p_1^{[1]}( t_{1} - t_{2} )  \big) \big(a_1^{[1]} -p_1^{[1]}(   t_{3}- t_{2} )  \big)\big(a_1^{[1]} -p_1^{[1]}( t_{3} +t_{4} )  \big)}{256 (\ch-\Omega_{1}-\Omega_0) (2\ch+\Omega_{2}-\Omega_0)(3\ch-\Omega_{3}-\Omega_0)} \, ,
\\
 \label{b13p4quat}
\betoneq{3}{1,2,3}{+,+,-}  & :=  \frac{\Omega_{1}\Omega_{2}\Omega_{3} \sqrt{\Omega_0\Omega_{4}} \big(a_1^{[1]} +p_1^{[1]}( t_{0} - t_{1} )  \big) \big(a_1^{[1]}-p_1^{[1]}( t_{1} + t_{2} )  \big) \big(a_1^{[1]} -p_1^{[1]}( t_{2} - t_{3} )  \big)\big(a_1^{[1]} -p_1^{[1]}(t_{4}- t_{3} )  \big)}{256 (\ch-\Omega_{1}-\Omega_0) (2\ch-\Omega_{2}-\Omega_0)(3\ch+\Omega_{3}-\Omega_0)} \, ,
\\
 \label{b13p4pent}
\betoneq{3}{1,2,3}{-,-,+}  & :=  \frac{\Omega_{1}\Omega_{2}\Omega_{3} \sqrt{\Omega_0\Omega_{4}} \big(a_1^{[1]} +p_1^{[1]}( t_{0} + t_{1} )  \big) \big(a_1^{[1]}+p_1^{[1]}( t_{1} + t_{2} )  \big) \big(a_1^{[1]} +p_1^{[1]}( t_{2} - t_{3} )  \big)\big(a_1^{[1]} -p_1^{[1]}( t_{3} +t_{4} )  \big)}{256 (\ch+\Omega_{1}-\Omega_0) (2\ch+\Omega_{2}-\Omega_0)(3\ch-\Omega_{3}-\Omega_0)} \, ,
\\
 \label{b13p4pent}
\betoneq{3}{1,2,3}{+,-,-}  & :=  \frac{\Omega_{1}\Omega_{2}\Omega_{3} \sqrt{\Omega_0\Omega_{4}} \big(a_1^{[1]} +p_1^{[1]}( t_{0} - t_{1} )  \big) \big(a_1^{[1]}-p_1^{[1]}( t_{1} - t_{2} )  \big) \big(a_1^{[1]} +p_1^{[1]}(   t_{3}+t_{2} )  \big)\big(a_1^{[1]} -p_1^{[1]}(t_{4}- t_{3} )  \big)}{256 (\ch-\Omega_{1}-\Omega_0) (2\ch+\Omega_{2}-\Omega_0)(3\ch+\Omega_{3}-\Omega_0)} \, ,
\\
 \label{b13p4esa}
\betoneq{3}{1,2,3}{-,+,-}  & :=  \frac{\Omega_{1}\Omega_{2}\Omega_{3} \sqrt{\Omega_0\Omega_{4}} \big(a_1^{[1]} +p_1^{[1]}( t_{0} + t_{1} )  \big) \big(a_1^{[1]}-p_1^{[1]}(  t_{2}-t_{1} )  \big) \big(a_1^{[1]} -p_1^{[1]}( t_{2} - t_{3} )  \big)\big(a_1^{[1]} -p_1^{[1]}(t_{4}- t_{3} )  \big)}{256 (\ch+\Omega_{1}-\Omega_0) (2\ch-\Omega_{2}-\Omega_0)(3\ch+\Omega_{3}-\Omega_0)} \, ,
\\  \label{b3123p4fin}
\betoneq{3}{1,2,3}{-,-,-}  & :=  \frac{\Omega_{1}\Omega_{2}\Omega_{3} \sqrt{\Omega_0\Omega_{4}} \big(a_1^{[1]} +p_1^{[1]}( t_{0} + t_{1} )  \big) \big(a_1^{[1]}+p_1^{[1]}(t_{1}+  t_{2} )  \big) \big(a_1^{[1]} +p_1^{[1]}( t_{2} + t_{3} )  \big)\big(a_1^{[1]} +p_1^{[1]}( t_{3}-t_{4} )  \big)}{256 (\ch+\Omega_{1}-\Omega_0) (2\ch+\Omega_{2}-\Omega_0)(3\ch+\Omega_{3}-\Omega_0)} \, ,
\end{align}
\end{subequations}
with, in view \cite[(5.6)]{BCMV} 
and recalling the wavenumber $\uphi(4,\tth)$ from \cite[Lemma 2.7]{BCMV},
for $ j = 0,1,2,3,4 $,  
\begin{equation} \label{tj4}
  \Omega_j^{(4)} := \sqrt{\big(j+\uphi(4,\tth)\big)\tanh\Big(\tth\big(j+\uphi(4,\tth)\big)\Big)}
  \, ,\qquad 
  t_j^{(4)} = \sqrt{\frac{j+\uphi(4,\tth)}{\tanh\Big(\tth\big(j+\uphi(4,\tth)\big)\Big)}}\,  ,
  \end{equation} 
%\begin{equation} \label{tj4}
%\begin{alignedat}{2}
%&\Omega_0^{(4)} = \sqrt{\uphi(4,\tth)\tanh(\tth\uphi(4,\tth))}
 % \, ,\qquad &&t_0^{(4)}= \sqrt{\frac{\uphi(4,\tth)}{\tanh(\tth\uphi(4,\tth))}}\, , \\ 
 % &\Omega_1^{(4)} = \sqrt{\big(1+\uphi(4,\tth)\big)\tanh\Big(\tth\big(1+\uphi(4,\tth)\big)\Big)}
 % \, ,\quad  &&t_1^{(4)} = \sqrt{\frac{1+\uphi(4,\tth)}{\tanh\Big(\tth\big(1+\uphi(4,\tth)\big)\Big)}}\, , \\ 
 % &\Omega_2^{(4)} = \sqrt{\big(2+\uphi(4,\tth)\big)\tanh\Big(\tth\big(2+\uphi(4,\tth)\big)\Big)}
  %\, ,\quad &&t_2^{(4)}= \sqrt{\frac{2+\uphi(2,\tth)}{\tanh\Big(\tth\big(2+\uphi(2,\tth)\big)\Big)}}\, , \\
  %  &\Omega_3^{(4)} = \sqrt{\big(3+\uphi(4,\tth)\big)\tanh\Big(\tth\big(3+\uphi(4,\tth)\big)\Big)}
 % \, ,\quad &&t_3^{(4)}= \sqrt{\frac{3+\uphi(4,\tth)}{\tanh\Big(\tth\big(3+\uphi(4,\tth)\big)\Big)}}\, ,\\
%      &\Omega_4^{(4)} = \sqrt{\big(4+\uphi(4,\tth)\big)\tanh\Big(\tth\big(4+\uphi(4,\tth)\big)\Big)}
 % \, ,\quad &&t_4^{(4)}= \sqrt{\frac{4+\uphi(4,\tth)}{\tanh\Big(\tth\big(4+\uphi(4,\tth)\big)\Big)}}\, , 
  %\end{alignedat}
 % \end{equation}
and $a_\ell^{[\ell]}$ and $p_\ell^{[\ell]} $ in \eqref{ap12}  for $\ell=1,2$, in \eqref{ap3} for $ \ell=3$ and, in view \cite[(A.59)-(A.60)]{BMV_ed},
\begin{equation}
\label{ap4}
\begin{aligned}
p_4^{[4]} &= \frac{-\ch^{20}-39 \ch^{16}-366 \ch^{12}-850 \ch^8-657 \ch^4-135}{64 \ch^{19} (\ch^4+5)} \, ,\\
a_4^{[4]} &= \frac{9 \ch^{24}+238 \ch^{20}-233 \ch^{16}-1676 \ch^{12}+743 \ch^8-3042 \ch^4-135}{128 \ch^{20} (\ch^4+5)}\, .
\end{aligned}
\end{equation}
By  \eqref{dwch}  the terms in \eqref{ap4} admit the expansion, as $\tth\to +\infty$
\begin{equation}\label{dwap4}
    p_4^{[4]}=-\frac{16}{3}-\frac{605}{9}e^{-2\tth} +O\big(e^{-4\tth}\big) \,, \quad a_4^{[4]}=-\frac{16}{3}-\frac{698 }{9}e^{-2\tth} +O\big(e^{-4\tth}\big)\, . 
\end{equation}
For the terms in \eqref{tj4} we first need a slight improvement of formula \cite[(2.31)]{BCMV}. 
\begin{lem} The  wavenumber $\uphi(4,\tth)$ in {\em \cite[Lemma 2.7]{BCMV}} satisfies
\begin{equation}\label{expuphip=4}
\uphi(4,\tth) = \frac94 - \frac{15}2 e^{-2\tth}  + O\big(e^{-4\tth}\big)  \quad \text{as} \quad \tth \to +\infty \, .
\end{equation}
\end{lem}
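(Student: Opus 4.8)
The plan is to follow the same scheme as in the proofs of the two previous lemmas, for $\uphi(2,\tth)$ and $\uphi(3,\tth)$. First I would recall from \cite[Appendix A]{BCMV} the representation $\uphi(4,\tth) = \tfrac94 + y_4(\tth)$, where the deep-water value $\tfrac94 = (4-1)^2/4$ is the relevant root of the limiting resonance equation $4 = \sqrt{\varphi} + \sqrt{\varphi+4}$ coming from the collision condition $\omega^+(4+\umu,\tth) = \omega^-(\umu,\tth)$, equivalently $4\,\ch = \Omega(\umu,\tth) + \Omega(4+\umu,\tth)$, and where $y_4(\tth) = -\tfrac{15}{2}\,e^{-2\tth}\big(1+o(1)\big)$ as $\tth\to+\infty$. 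From \cite[Appendix A]{BCMV} (the analogue of \eqref{p=2as2} for $\tp=3$) the function $y_4$ solves a fixed-point equation which, after inserting \eqref{dwch} and the Taylor expansions $\sqrt{\tfrac94+y_4} = \tfrac32 + \tfrac13 y_4 + O(y_4^2)$, $\sqrt{\tfrac{25}{4}+y_4} = \tfrac52 + \tfrac15 y_4 + O(y_4^2)$ into the resonance identity $4\,\ch = \Omega\big(\tfrac94+y_4,\tth\big) + \Omega\big(\tfrac{25}{4}+y_4,\tth\big)$, reduces to
$$\tfrac{8}{15}\,y_4(\tth) = -4\,e^{-2\tth} + O\big(y_4^2(\tth)\big) + O\big(e^{-4\tth}\big)\,.$$

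The structural point that makes \eqref{expuphip=4} hold with the clean \emph{additive} remainder $O(e^{-4\tth})$, rather than with a multiplicative correction as for $\tp=2,3$, is that $\tfrac94 > 2$. Indeed the hyperbolic tangents $\tanh\big(\tth(j+\uphi(4,\tth))\big)$, $j\ge 0$, and in particular $\tanh\big(\tth\,\uphi(4,\tth)\big)$, differ from $1$ only by $O\big(e^{-2\tth\uphi(4,\tth)}\big) = O\big(e^{-\frac92\tth}e^{-2\tth y_4}\big) = o\big(e^{-4\tth}\big)$, since $y_4 = O(e^{-2\tth})$ forces $\tth y_4\to 0$ and hence $e^{-2\tth y_4} = 1+O(\tth e^{-2\tth})$. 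Thus, in contrast with $\tp=2$, where $\tanh(\tth/4)$ generates an $e^{-\tth/2}$ term, and with $\tp=3$, where $\tanh(\tth)$ generates an $e^{-2\tth}$ term carrying the transcendental factor $e^{-2\tth y_3}$, here no transcendental factor $e^{-c\tth y_4}$ survives at the order $e^{-2\tth}$ that must be resolved; the sole $e^{-2\tth}$ correction enters through $\ch$.

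It then remains only to solve the displayed equation: using the crude bound $y_4 = O(e^{-2\tth})$ to absorb $O(y_4^2) = O(e^{-4\tth})$, I obtain $\tfrac{8}{15}y_4(\tth) = -4\,e^{-2\tth} + O(e^{-4\tth})$, hence $y_4(\tth) = -\tfrac{15}{2}\,e^{-2\tth} + O(e^{-4\tth})$, which is \eqref{expuphip=4}. Alternatively, exactly as for $\tp=2,3$, one may substitute the refined ansatz $y_4(\tth) = -\tfrac{15}{2}\,e^{-2\tth}\big(1+z_4(\tth)\big)$ with $z_4 = o(1)$ into the fixed-point equation and read off $z_4 = O(e^{-2\tth})$.

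The main obstacle is bookkeeping rather than conceptual: against the precise form of the equation in \cite[Appendix A]{BCMV} one has to check that the only term contributing at order $e^{-2\tth}$ is the $\ch$-term, that each leftover transcendental piece genuinely lands in $O(e^{-4\tth})$ or smaller, and that the remainders $O(y_4^2)$ and $O(e^{-4\tth})$ are uniform for $\tth$ large; a slip here would change the clean error term. Should \cite[Appendix A]{BCMV} record the $y_4$-equation only to leading order, the refinement should instead be derived directly from the exact resonance identity $\omega^+(4+\umu,\tth) = \omega^-(\umu,\tth)$ along the lines above.
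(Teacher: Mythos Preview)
Your argument is correct. You follow the same scheme as for $\tp=2,3$: write $\uphi(4,\tth)=\tfrac94+y_4(\tth)$, expand the resonance identity $4\ch=\Omega(\tfrac94+y_4,\tth)+\Omega(\tfrac{25}{4}+y_4,\tth)$, and read off $\tfrac{8}{15}y_4=-4e^{-2\tth}+O(y_4^2)+O(e^{-4\tth})$. Your key observation that $\tfrac94>2$ forces $\tanh\big(\tth(\tfrac94+y_4)\big)=1+O(e^{-\frac92\tth(1+o(1))})=1+O(e^{-4\tth})$, so that no transcendental factor $e^{-c\tth y_4}$ contaminates the order $e^{-2\tth}$, is exactly the right structural point and explains the clean additive remainder.

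The paper's own proof is far terser: it simply records that \cite[(A.11),(2.31)]{BCMV} together with the last line of the proof of \cite[Lemma 2.7]{BCMV} already give $y_4(\tth)=-\tfrac{15}{2}e^{-2\tth}+O(e^{-4\tth})$ directly, without reproducing any computation. So your proof is not a different route but rather an explicit unpacking of what the paper takes as a black-box citation; it has the merit of being self-contained and of making transparent why $\tp=4$ behaves more simply than $\tp=2,3$ in this respect.
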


\begin{proof} 
By \cite[formulas (A.11),(2.31)]{BCMV}, and the last line of the proof of \cite[Lemma 2.7]{BCMV} we have that 
$$
\uphi(4,\tth) = \frac94 + y_4(\tth) \quad \text{where} \quad 
y_4(\tth) = -\frac{15}{2}e^{-2\tth} + O\big(e^{-4\tth}\big) 
$$ 
which is \eqref{expuphip=4}.
\end{proof}

By \eqref{expuphip=4} we get 
\begin{equation}\label{tanhallp=4}
\tanh\big(\tth\big(j+\uphi(4,\tth)\big)\big) =  1 + O\big( e^{-4\tth}  \big)\, ,\qquad j=0,1,2,3\, .
\end{equation}
By \eqref{tanhallp=4} and \eqref{expuphip=4}, the terms in \eqref{tj3} admit the expansions, as $\tth \to +\infty $,
 \begin{equation} \label{tj4asymp}
\begin{alignedat}{2}
&\Omega_0^{(4)} 
= \tfrac32 - \tfrac52 e^{-2{\tth}} + O\big( e^{-4\tth}  \big)
  \, ,\quad &&t_0^{(4)}
  = \tfrac32 - \tfrac52 e^{-2{\tth}} + O\big(e^{-4\tth}\big)\, ,  \\ 
  &\Omega_1^{(4)} 
  = \tfrac{\sqrt{13}}{2} -  \tfrac{15}{2\sqrt{13}} e^{-2\tth} + O\big(e^{-4\tth}\big)
  \, ,\quad  &&t_1^{(4)}   = \tfrac{\sqrt{13}}{2} -  \tfrac{15}{2\sqrt{13}} e^{-2\tth} +  O\big(e^{-4\tth}\big)\, ,\\ 
    &\Omega_2^{(4)} 
    = \tfrac{\sqrt{17}}{2}-\tfrac{15 }{2 \sqrt{17}}e^{-2{\tth}} + O\big(e^{-4\tth}\big)
  \, ,\quad  &&t_2^{(4)}=  \tfrac{\sqrt{17}}{2}-\tfrac{15 }{2 \sqrt{17}}e^{-2{\tth}} + O\big(e^{-4\tth}\big) \, , \\ 
      &\Omega_3^{(4)} 
    =  \tfrac{\sqrt{21}}{2}-\tfrac{5\sqrt{3}}{2\sqrt{7}}   e^{-2{\tth}} + O\big(e^{-4\tth}\big)\,, \quad &&t_3^{(4)}  =\tfrac{\sqrt{21}}{2}-\tfrac{5\sqrt{3}}{2\sqrt{7}}   e^{-2{\tth}} + O\big(e^{-4\tth}\big)\,, \\ 
        &\Omega_4^{(4)} 
    =\tfrac{5}{2}-\tfrac{3 }{2}  e^{-2{\tth}} + O\big(e^{-4\tth}\big)
  \, , \quad
  &&t_4^{(4)}  = \tfrac{5}{2}-\tfrac{3 }{2}  e^{-2{\tth}}+ O\big(e^{-4\tth}\big) \, .
  \end{alignedat}
  \end{equation}
We now compute the asymptotic expansion of the summands in \eqref{b1partsp4}. We group them depending on the number of visited intermediate harmonics.\smallskip

\noindent{\bf No intermediate harmonics}. By \eqref{dwap4} and \eqref{tj4asymp} the first term in \eqref{b1partsp4} admits the  expansion
\begin{equation}\label{b0}
    b_0^{(4)}= -\frac{5 \sqrt{15} }{8} e^{-2\tth} + O\big(e^{-4\tth} \big) \quad \text{as} \quad \tth\to +\infty \, .
\end{equation}
Note that $ b_0^{(4)} $  vanishes in the deep water limit, as stated in \cite[(6.36)]{BCMV}.\smallskip

\noindent{\bf One intermediate harmonic}. We consider the terms in \eqref{b113p4}-\eqref{b115p4bis}. By \eqref{dwch}, \eqref{dwap12}, \eqref{dwap3}  and \eqref{tj4asymp} we get, as $\tth\to +\infty $, 
\begin{equation}
\begin{aligned}
    \betoneq{1}{1}{-} &=\frac{3 \sqrt{195}}{32}+\frac{1}{416} \left(1829 \sqrt{13}+2535\right) \sqrt{\frac{3}{5}} e^{-2\tth} + O\big(e^{-4\tth}\big)\, ,\\
    \betoneq{1}{1}{+} &= \frac{3 \sqrt{195}}{32}+\frac{1}{416} \left(1829 \sqrt{13}-2535\right) \sqrt{\frac{3}{5}} e^{-2\tth} + O\big(e^{-4\tth}\big)\, .
\end{aligned}
\end{equation}
Note that the difference 
\begin{equation}\label{b1}
    \betoneq{1}{1}{} := \betoneq{1}{1}{+}- \betoneq{1}{1}{-} = -\frac{39 \sqrt{15}}{16} e^{-2\tth} + O\big(e^{-4\tth}\big)
\end{equation}
vanishes in the deep-water limit as stated in \cite[Lemma 6.7]{BCMV}. Similarly
\begin{equation}
    \betoneq{1}{2}{-} =\frac{\sqrt{255}}{16}+\frac{4331 }{16 \sqrt{255}}e^{-2\tth} + O\big(e^{-4\tth}\big)\, , \quad
    \betoneq{1}{2}{+} = \frac{\sqrt{255}}{16}+\frac{4331 }{16 \sqrt{255}}e^{-2\tth} + O\big(e^{-4\tth}\big)\, ,
\end{equation}
whose difference 
\begin{equation}\label{b2}
    \betoneq{1}{2}{} := \betoneq{1}{2}{+}- \betoneq{1}{2}{-} =  O\big(e^{-4\tth}\big)
\end{equation}
vanishes in the deep-water limit as stated in \cite[Lemma 6.7]{BCMV}. Finally
\begin{equation}
\begin{aligned}
    \betoneq{1}{3}{-} &=\frac{9 \sqrt{35}}{32}+\Big(\frac{3723}{32 \sqrt{35}}-\frac{63 \sqrt{15}}{32}\Big) 
    e^{-2\tth} + O\big(e^{-4\tth}\big)\, ,\\
    \betoneq{1}{3}{+} &= \frac{9 \sqrt{35}}{32}+ \Big(\frac{3723}{32 \sqrt{35}}+\frac{63 \sqrt{15}}{32}\Big) 
    e^{-2\tth} + O\big(e^{-4\tth}\big)\, ,
\end{aligned}
\end{equation}
whose difference 
\begin{equation}\label{b3}
    \betoneq{1}{3}{} := \betoneq{1}{3}{+}- \betoneq{1}{3}{-} = 
    \frac{63 \sqrt{15} }{16} e^{-2\tth} + O\big(e^{-4\tth}\big)
\end{equation}
vanishes in the deep-water limit as stated in \cite[Lemma 6.7]{BCMV}. \smallskip

\noindent{\bf Two intermediate harmonics}. We now consider the terms in \eqref{b1234p4}-\eqref{b1245p4quat}. By \eqref{dwch}, \eqref{dwap12}, \eqref{dwap3}  and \eqref{tj4asymp} we get, as $\tth\to +\infty $, 
\begin{equation}
\begin{aligned}
    \betoneq{2}{1,2}{-,+} &=\frac{1}{256} \big(-17 \sqrt{195}-26 \sqrt{255}-11 \sqrt{3315} \, \big)\\
    &\qquad +\frac{\big(531828 \sqrt{13}-964171 \sqrt{17}-327976 \sqrt{221}+3174665 \big) }{113152 \sqrt{15}} e^{-2\tth} + O\big(e^{-4\tth}\big) \, ,\\
    \betoneq{2}{1,2}{-,-} &= \frac{1}{256} 
    \big(17 \sqrt{195}-26 \sqrt{255}-11 \sqrt{3315} \, \big)\\
    &\qquad -\frac{531828 \sqrt{13}+964171 \sqrt{17}+327976 \sqrt{221}+3174665}{113152 \sqrt{15}} e^{-2\tth} + O\big(e^{-4\tth}\big)\, ,\\
    \betoneq{2}{1,2}{+,-} &= \frac{1}{256} 
    \big(17 \sqrt{195}+26 \sqrt{255}-11 \sqrt{3315} \, \big) \\ 
    &\qquad+ \frac{-531828 \sqrt{13}+964171 \sqrt{17}-327976 \sqrt{221}+3174665}{113152 \sqrt{15}} e^{-2\tth} + O\big(e^{-4\tth}\big)\, ,\\
     \betoneq{2}{1,2}{+,+} &=  \frac{1}{256} \big(-17 \sqrt{195}+26 \sqrt{255}-11 \sqrt{3315} \, \big)\\
     &\qquad \frac{531828 \sqrt{13}+964171 \sqrt{17}-327976 \sqrt{221}-3174665}{113152 \sqrt{15}} e^{-2\tth} + O\big(e^{-4\tth}\big)\, ,
\end{aligned}
\end{equation}
whose signed sum 
\begin{equation}\label{b12}
    \betoneq{2}{1,2}{} :=  \betoneq{2}{1,2}{+,+}-\betoneq{2}{1,2}{-,+}+\betoneq{2}{1,2}{-,-}-\betoneq{2}{1,2}{+,-}  = -\frac{2873}{128}  \sqrt{\frac{5}{3}} e^{-2\tth} + O\big(e^{-4\tth}\big)
\end{equation}
vanishes in the deep-water limit as stated in \cite[Lemma 6.7]{BCMV}. Similarly we compute (via Mathematica)
% \begin{equation}
% \begin{aligned}
%     \betoneq{2}{1,3}{-,+} &=-\frac{3}{128} \big( 13 \sqrt{35}+7 \sqrt{195}+8 \sqrt{455}\, \big)\\
%     &\qquad +\frac{-538265 \sqrt{15}-473603 \sqrt{35}-206087 \sqrt{195}-165133 \sqrt{455}}{116480}e^{-2\tth} + O\big(e^{-4\tth}\big) \, ,\\
%     \betoneq{2}{1,3}{-,-} &= \frac{3}{128} \big(13 \sqrt{35}-7 \sqrt{195}+8 \sqrt{455}\, \big)\\
%     &\qquad +\frac{538265 \sqrt{15}-473603 \sqrt{35}+206087 \sqrt{195}-165133 \sqrt{455}}{116480}e^{-2\tth} + O\big(e^{-4\tth}\big)\, ,\\
%     \betoneq{2}{1,3}{+,-} &= -\frac{3}{128} \big(-13 \sqrt{35}-7 \sqrt{195}+8 \sqrt{455}\, \big)\\
%     &\qquad \frac{-538265 \sqrt{15}+473603 \sqrt{35}+206087 \sqrt{195}-165133 \sqrt{455}}{116480}e^{-2\tth} + O\big(e^{-4\tth}\big)\, ,\\
%      \betoneq{2}{1,3}{+,+} &=-\frac{3}{128} \big(-13 \sqrt{35}+7 \sqrt{195}+8 \sqrt{455}\, \big)\\
%      &\qquad +\frac{538265 \sqrt{15}+473603 \sqrt{35}-206087 \sqrt{195}-165133 \sqrt{455}}{116480}e^{-2\tth} + O\big(e^{-4\tth}\big)\, ,
% \end{aligned}
% \end{equation}
% whose signed sum 
\begin{equation}\label{b13}
    \betoneq{2}{1,3}{} :=  \betoneq{2}{1,3}{+,+}-\betoneq{2}{1,3}{-,+}+\betoneq{2}{1,3}{-,-}-\betoneq{2}{1,3}{+,-}  = \frac{1183 \sqrt{15}}{64}e^{-2\tth} + O\big(e^{-4\tth}\big)
\end{equation}
and
% \begin{equation}
% \begin{aligned}
%     \betoneq{2}{2,3}{-,+} &=-\frac{3}{256} \big(17 \sqrt{35}+14 \sqrt{255}+13 \sqrt{595}\, \big) \\
%     &\qquad  +\frac{920465 \sqrt{15}+340612 \sqrt{35}-498421 \sqrt{255}-376672 \sqrt{595}}{304640}e^{-2\tth} + O\big(e^{-4\tth}\big) \, ,\\
%     \betoneq{2}{2,3}{-,-} &=-\frac{3}{256} \big(17 \sqrt{35}-14 \sqrt{255}+13 \sqrt{595} \, \big) \\
%     &\qquad +\frac{-920465 \sqrt{15}+340612 \sqrt{35}+498421 \sqrt{255}-376672 \sqrt{595}}{304640}e^{-2\tth} + O\big(e^{-4\tth}\big)\, ,\\
%     \betoneq{2}{2,3}{+,-} &= \frac{1}{256} \big(51 \sqrt{35}+42 \sqrt{255}-39 \sqrt{595}\, \big)\\
%     &\qquad +\frac{920465 \sqrt{15}-340612 \sqrt{35}+498421 \sqrt{255}-376672 \sqrt{595}}{304640}e^{-2\tth} + O\big(e^{-4\tth}\big)\, ,\\
%      \betoneq{2}{2,3}{+,+} &=-\frac{3}{256} \big(-17 \sqrt{35}+14 \sqrt{255}+13 \sqrt{595}\, \big)\\
%      &\qquad +\frac{-920465 \sqrt{15}-340612 \sqrt{35}-498421 \sqrt{255}-376672 \sqrt{595}}{304640}e^{-2\tth} + O\big(e^{-4\tth}\big)\, ,
% \end{aligned}
% \end{equation}
% whose signed sum 
\begin{equation}\label{b23}
    \betoneq{2}{2,3}{} :=  \betoneq{2}{2,3}{+,+}-\betoneq{2}{2,3}{-,+}+\betoneq{2}{2,3}{-,-}-\betoneq{2}{2,3}{+,-}  =-\frac{1547 \sqrt{15}}{128}  e^{-2\tth} + O\big(e^{-4\tth}\big)\, ,
\end{equation}
which are both vanishing in the deep-water limit as stated in \cite[Lemma 6.7]{BCMV}.\smallskip

\noindent{\bf Three intermediate harmonics}. By \eqref{dwch}, \eqref{dwap12}, \eqref{dwap3}, \eqref{tj4asymp} and a very long computation using Mathematica, the terms in \eqref{b3123p4}-\eqref{b3123p4fin} are such that the term 
\begin{equation}\label{b123}
\begin{aligned}
    &\betoneq{3}{1,2,3}{} :=  \betoneq{3}{1,2,3}{-,+,-}-\betoneq{3}{1,2,3}{-,-,-}+\betoneq{3}{1,2,3}{+,-,-}-\betoneq{3}{1,2,3}{+,+,-} \\
    &\quad +\betoneq{3}{1,2,3}{+,+,+}-\betoneq{3}{1,2,3}{+,-,+}+\betoneq{3}{1,2,3}{-,-,+}-\betoneq{3}{1,2,3}{-,+,+}  =  O\big(e^{-4\tth}\big)
\end{aligned}
\end{equation}
vanishes in the deep-water limit as stated in \cite[Lemma 6.7]{BCMV}.

The sum of the terms in \eqref{b0}, \eqref{b1}, \eqref{b2}, \eqref{b3}, \eqref{b12}, \eqref{b13}, \eqref{b23} and \eqref{b123} gives the asymptotic expansion in \eqref{expbeta1p=4}. 
This concludes the proof of the  Theorem \ref{thm:main2new}.

%   e^{-2\tth} + O\big(e^{-4\tth}\big)

 \begin{footnotesize} 
 	
\end{footnotesize}

\noindent 
\footnotesize{Work  supported by PRIN 2020 (2020XB3EFL001) “Hamiltonian and dispersive PDEs”, PRIN 2022 (2022HSSYPN)   “TESEO-Turbulent Effects vs Stability in Equations from Oceanography".
P.\ Ventura is supported by the ERC STARTING GRANT 2021 ``Hamiltonian Dynamics, Normal Forms and Water Waves'' (HamDyWWa), Project Number: 101039762.
A.\ Maspero is supported by the ERC CONSOLIDATOR GRANT 2023 ``Generating Unstable Dynamics in dispersive Hamiltonian fluids'' (GUnDHam), Project Number: 101124921.
 Views and opinions expressed are however those of the authors only and do not necessarily reflect those of the European Union or the European Research Council. Neither the European Union nor the granting authority can be held responsible for them.
}
\normalsize

\medskip

\noindent 
{\it Massimiliano Berti}, SISSA, Via Bonomea 265, 34136, Trieste, Italy, \texttt{berti@sissa.it},  
\\[1mm]
{\it Livia Corsi}, Università degli Studi Roma Tre, Via Ostiense 133B, 00154, Roma, Italy, \texttt{liva.corsi@uniroma3.it},  
\\[1mm]
{   \it Alberto Maspero}, SISSA, Via Bonomea 265, 34136, Trieste, Italy,  \texttt{amaspero@sissa.it}, 
 \\[1mm] 
{   \it Paolo Ventura}, Università degli Studi di Milano, Via Saldini 50, 
20133 Milano, \texttt{paolo.ventura@unimi.it}.
\end{document}